\DeclareMathOperator*{\argmin}{argmin}
\DeclareMathOperator*{\supp}{supp}
\newtheorem{thm}{Theorem}
\newtheorem{lemma}{Lemma}
\newtheorem{corollary}{Corollary}
\begin{document}

\title{Oracle Inequalities for Convex Loss Functions with Non-Linear Targets%:\\
%The Case of the Elastic Net Penalty
%with Many Parameters
}
\author{\textsc{Mehmet Caner\thanks{%
North Carolina State University, Department of Economics, 4168 Nelson Hall,
Raleigh, NC 27695. Email: \texttt{mcaner@ncsu.edu}.}} \and \textsc{Anders Bredahl Kock\thanks{%
Aarhus University and CREATES, Department of Economics, Fuglesangs Alle 4,  8210 Aarhus V, Denmark. Email: \texttt{akock@creates.au.dk}.  
}}} 
\date{\today}

\maketitle

\begin{abstract}
This paper consider penalized empirical loss minimization of convex loss functions with unknown non-linear target functions. Using the elastic net penalty we establish a finite sample oracle inequality which bounds the loss of our estimator from above with high probability. If the unknown target is linear this inequality also provides an upper bound of the estimation error of the estimated parameter vector. These are new results and they generalize the econometrics and statistics literature. Next, we use the non-asymptotic results to show that the excess loss of our estimator is asymptotically of the same order as that of the oracle. If the target is linear we give sufficient conditions for consistency of the estimated parameter vector. Next, we briefly discuss how a thresholded version of our estimator can be used to perform consistent variable selection. We give two examples of loss functions covered by our framework and show how penalized nonparametric series estimation is contained as a special case and provide a finite sample upper bound on the mean square error of the elastic net series estimator. %Some simulations illustrate the finite sample bounds. 
\vspace{.3cm}

\noindent \textit{Keywords and phrases}: Empirical loss minimization, Lasso, Elastic net, Oracle inequality, Convex loss function, Nonparametric estimation, Variable selection. 

\noindent \textit{\medskip \noindent JEL classification}: C13, C21, C31.
\end{abstract}

\section{Introduction}
Recently high-dimensional data sets have become increasingly available to researchers in many fields. In economics big data can be found in the analysis of consumer behavior based on scanner data from purchases. Furthermore, many macroeconomic variables are sampled rather infrequently leaving one with many variables compared to observations in models with many explanatory variables. Financial data is also of a high-dimensional nature with many variables and instruments being observed in small intervals due to high-frequency trading.  Alternatively, models with many variables emerge when trying to control for non-linearities in a wage regression by including basis functions of the space in which the non-linearity is supposed to be found.  Clearly, including more basis functions can result in better approximations of the non-linearity. However, this also results in a model with many variables, i.e. a high-dimensional model. For these reasons handling high-dimensional data sets has received a lot of attention in the econometrics and statistics literature in the recent years. In a seminal paper \cite{tibshirani96} introduced the Lasso estimator which carries out variable selection and parameter estimation simultaneously. The theoretical properties of this estimator have been studied extensively since then in various papers and extensions such as the adaptive Lasso by \cite{zou06}, the bridge estimator by \cite{huanghm08}, the sure independence screening by \cite{fanl08} or the square root Lasso by \cite{bellonicw11} have been proposed. For recent reviews see, e.g., \cite{fan11}, \cite{vdGB11} or \cite{bellonic11}. 

In the econometrics literature Lasso-type estimators have also proven useful. For example \cite{belloni2012sparse} have established results in the context of instrumental variable estimation without imposing the hitherto much used assumption of sub-gaussianity by means of moderate deviation theorems for self-normalized random variables. Furthermore, they allow for heteroscedastic error terms which is pathbreaking and greatly widens the scope of applicability of their results.  

Applications to panel data may be found in e.g. \cite{kock2013a}. The estimators have been studied in the context of GMM, factor models, and smooth penalties by, among others, \cite{canerz13}, \cite{canerh13}, \cite{cheng2013select} and \cite{fan2001variable}. Within linear time series models oracle inequalities have been established by \cite{kock2013oracle} and \cite{negahban12unified} have proposed a unified framework which is valid for regression as well as matrix estimation problems.

Most research has considered the linear regression model or other parametric models. In this paper we shall focus on a very general setup. In particular, we will focus on penalized empirical loss minimization of convex loss functions with potentially non-linear target functions. \cite{vdG08} studied a similar setup for the Lasso which is a special case of our results for the elastic net. Furthermore, even though our main focus is on non-asymptotic bounds, we also present asymptotic upper bounds on the excess risk and estimation error (the latter in the case where the target is linear). We also show how our results can be used to give new non-asymptotic upper bounds on penalized series estimators with many series terms. 

In particular, we
\begin{enumerate}
\item provide a finite sample oracle inequality for empirical risk minimization penalized by the elastic net penalty. This inequality is valid for convex loss functions and non-linear targets and contains an oracle inequality for the Lasso as a special case.
\item For the case where the target function is linear this oracle inequality can be used to establish finite sample upper bounds on the estimation error of the estimated parameter vector. 
\item The finite sample inequality is used to establish asymptotic results. In particular, the excess risk of our estimator is of the same order as that of an oracle which trades of the approximation and estimation errors. When the target is linear we give sufficient conditions for consistency of the estimated parameter vector.
\item In the case where the target is linear we briefly explain how a thresholded version of our estimator can unveil the correct sparsity pattern. 
\item We provide two examples of specific loss functions covered by our general framework. We verify in detail that the abstract conditions are satisfied in common settings. Then we show how nonparametric series estimation is contained as a special case of our theory and provide a finite sample upper bound on the mean square error of an elastic net series estimator. We explain why this series estimator may be more precise than classical series estimators.
\item We also note that when the loss function is quadratic the sample does not have to be identically distributed and our results are therefore also valid in the presence of heteroscedasticity.
%\item Finally, we illustrate our finite sample results by means of a small simulation exercise.   
\end{enumerate}

We stress here that our main objective is to establish upper bounds on the performance of the elastic net. It is not our intention to promote either the Lasso or the elastic net, merely to analyze the properties of the latter. However, we shall make some brief comments on merits of the two procedures when compared to each other. A clear ranking like the one in \cite{hebirivdG11} is not available at this point. However, these authors only focus on quadratic loss for which a certain data augmentation trick facilitates the analysis.

We believe that the performance guarantees on the elastic net provided by this paper are useful for the applied researcher who increasingly faces high-dimensional data sets. The usefulness is enhanced by the fact that our results are valid for a wide range of loss functions and that heteroscedasticity is allowed for when the loss function is quadratic.

The paper is organized as follows. Section \ref{setup} puts forward the setup and notation. Section \ref{secoracle} introduces the main result, the oracle inequality for empirical loss minimization of convex loss functions penalized by the elastic net. Section \ref{varsel} briefly discusses consistent variable selection by a thresholded version of the elastic net. Tuning parameter selection is handled in Section \ref{Tuning}. Section \ref{Examples} shows that the quadratic as well as the logistic loss are covered by our framework and provides an oracle inequality for penalized series estimators.

\section{Setup and notation}\label{setup}
We begin by setting the stage for general convex loss minimization. The setup is similar to the Lasso one in Section 6.3 in \cite{vdGB11}. Let $(\Omega, \mathcal{F}, P)$ be a standard probability space. Consider a sample $\cbr{Z_i}_{i=1}^n=\cbr{X_i,Y_i}_{i=1}^n$ with $X_i\in\mathcal{X}$ and $Y_i\in\mathcal{Y}\subseteq \mathbb{R}$. Here, for the sake of exposition, $\mathcal{X}$ can be thought of as a subset of $\mathbb{R}^p$ for $p\geq 1$ but as we shall see below it can be much more general. Define $\mathcal{Z}=\mathcal{X}\times\mathcal{Y}$ and let $\mathbf{F}$ be a normed real vector space with norm $\enVert[0]{\cdot}$. For each $f\in\mathbf{F}$ let $\rho_f:\mathcal{Z}\to\mathbb{R}$ be a loss function. More precisely, $f\in \mathbf{F}$ will be a function $f:\mathcal{X}\to\mathbf{R}$ and the corresponding norm will most often be the $L_2(P)$-norm $\del[0]{\int f^2(X)dP}^{1/2}$ (in fact $\enVert[0]{\cdot}$ will be the $L_2(P)$-norm in almost all our econometric examples\footnote{This choice of norm on $\mathbf{F}$ is suitable when the sample is supposed to be i.i.d.}). Furthermore, when $v$ is a vector in $\mathbb{R}^p$, $\enVert[0]{v}_1=\sum_{j=1}^p|v_j|$ denotes the $\ell_1$-norm while $\enVert[0]{v}_2=\sqrt{\sum_{j=1}^pv_j^2}$ denotes the $\ell_2$-norm. Order symbols such as $o, O, o_p$ and $O_p$ are used with their usual meanings. Also, in accordance with the usual Landau notation, $g_1(n)\in \Omega(g_2(n))$ means that there exists a constant $a>0$ such that $g_1(n)\geq ag_2(n)$ for $n$ sufficiently large. $\Theta(g_2(n))$ denotes the intersection of $O(g_2(n))$ and $\Omega(g_2(n))$ and contains all functions $g_1(n)$ that are exactly of order $g_2(n)$. Finally, for any abstract set $A$, $|A|$ denotes its cardinality. Throughout the paper we shall assume:
 
\textbf{Assumption 0:} $\cbr{Z_i}_{i=1}^n=\cbr{X_i,Y_i}_{i=1}^n$ is an independent sample and the mapping $f\mapsto \rho_f(z)$ is convex for all $z\in \mathcal{Z}$. 

The following examples provide illustrations of when the conditions in Assumption 0 are met:

\subsection*{Quadratic loss}
Let $\mathcal{X}\subset \mathbb{R}^p$ and
\begin{align*}
Y_i=f^0(X_i)+\epsilon_i,\ i=1,...,n
\end{align*}
where $\epsilon_i$ is some real error term and $f^0\in\mathbf{F}$. Then, the standard case of quadratic loss is covered by the above setting upon choosing $\rho_f(x,y)=\del[0]{y-f(x)}^2$ which is clearly convex in $f(x)$. By letting $\mathbf{F}$ only consist of linear functions $f(x)=\beta'x$ for some $\beta\in\mathbb{R}^p$ the case of linear least squares is covered. Non-linear least squares is covered by choosing $f(x)=g(\beta,x)$ for some parameter vector $\beta$. As we shall see in Section \ref{npsec} this setup can also be used to obtain some new upper bounds on nonparametric series estimation.

\subsection*{Logistic loss}
Let
\begin{align*}
Y_i^*=f^0(X_i)+\epsilon_i, i=1,...,n
\end{align*}
where $\epsilon_i$ is independent of $X_i$ and assumed to have a logistic distribution while $f^0\in\mathbf{F}$.
Assume that $Y_i=1$ if $Y_i^*> 0$ and $Y_i=0$ otherwise. Since $\epsilon_i$ has cdf $F(z)=\frac{e^z}{1+e^z}$ one gets
\begin{align*}
P(Y_i=1|X_i=x)
=
P(f^0(X_i)+\epsilon_i> 0|X_i=x)
=
\frac{e^{f^0(x)}}{1+e^{f^0(x)}}
\end{align*}
Note that for $\mathcal{X}\subseteq\mathbb{R}^p$ and $f^0(x)={\beta^0}'x$ for some parameter vector $\beta\in\mathbb{R}^p$ this is the usual expression for $P(Y_i=1|X_i=x)$ in the logit model. The above setting is more general, however, since it allows $f^0$ to be non-linear.

The log-likelihood function for a given $f\in\mathbf{F}$ is then given by (for $z=(x,y)$)
\begin{align*}
l(f|z_1,...,z_n)
=
\sum_{i=1}^n\sbr[2]{y_i\log(\frac{e^{f(x_i)}}{1+e^{f(x_i)}})+(1-y_i)(1-\frac{e^{f(x_i)}}{1+e^{f(x_i)}})}
=
\sum_{i=1}^n\sbr[2]{y_if(x_i)-\log(1+e^{f(x_i)})}
\end{align*}
Hence, a sensible loss function is the negative log-likelihood
\begin{align*}
\rho_f(x,y)=-yf(x)+\log(1+e^{f(x)})
\end{align*} 
which is convex in $f(x)$.
  
\subsection*{Negative log-likelihood}
The above two examples are both instances of the loss function being the negative of the log-likelihood. Hence, in a general setting with the negative of the log-likelihood being a convex function in $f(x)$ our results also apply. Again, a special case is $f(x)=\beta'x$.

\vspace{0.5cm}

Returning to the general setup, denote by $P_n\rho_f=\frac{1}{n}\sum_{i=1}^n\rho_f(X_i,Y_i)$ and $P\rho_f=\frac{1}{n}\sum_{i=1}^nE\rho_f(X_i,Y_i)$ the empirical and population means of the loss function for a fixed $f\in\mathbf{F}$. We shall also denote these two quantities the empirical and population risk, respectively. Note also, that in the case of identically distributed variables the population mean reduces to the plain expectation $E\rho_f(X_1,Y_1)$. We define our target as the minimizer of the theoretical risk
\begin{align*}
f^0:=\argmin_{f\in\mathbf{F}}P\rho_f
\end{align*}
where it is tacitly assumed that the minimizer exists and is unique for the $\enVert[0]{\cdot}$-norm on $\mathbf{F}$. Then, for any $f\in\mathbf{F}$, we define the excess population risk over the target as
\begin{align*}
\Xi(f):=P(\rho_f-\rho_{f^0})
\end{align*}
Note that, by construction, $\Xi(f)\geq 0$ for all $f\in\mathbf{F}$. Since the joint distribution of $(Y_i,X_i)$ is assumed to be unknown we shall consider empirical risk minimization instead of minimizing the population excess risk. Put differently, $P_n\rho_f$ is minimized. Furthermore, we will consider a linear subspace ${\cal F}_L = \{ f_{\beta}(x)=\sum_{j=1}^p\beta_j\psi_j(x),\ \beta\in \Phi\}$ of $\mathbf{F}$ where $\psi_j(x):\mathcal{X}\to\mathbb{R}$ may be thought of as basis functions of $\mathbf{F}$. Of course, in the case where $\mathcal{X}$ is a subset of $\mathbb{R}^p$, one could also think of $\psi_j(x)$ as being the $j$'th coordinate projection. This is the choice we make whenever $f^0$ is assumed to be linear. In general, $\psi(x)=(\psi_1(x),...,\psi_p(x))'$ denotes a vector of (transformed) covariates. $\Phi$ is a convex subset of $\mathbb{R}^p$ -- in many cases we can even have $\Phi=\mathbb{R}^p$. In Section \ref{Examples} we shall see an example of $\Phi=\mathbb{R}^p$ but also an example of $\Phi$ being a subset of $\mathbb{R}^p$. In case the target function $f^0$ is linear we will denote its parameter vector by $\beta^0$.

As we shall see, it is possible to prove upper bounds on the excess risk of a penalized version of the empirical risk minimizer even when $f^0$ is non-linear while we only minimize over the linear sub space $\mathcal{F}_L$
. This is non-trivial since the target belongs to a large set ($\mathbf{F}$) while we only minimize over a smaller set ($\mathcal{F}_L$). The following section gives an exact definition and discussion of our estimator.

\subsection{The elastic net}
Define 
\begin{align*}
\hat{f}
=
f_{\hat{\beta}}
=
\argmin_{f_\beta:\beta\in\Phi} \del[2]{ P_n \rho_{f_{\beta}} + \lambda_1 \| \beta \|_1 + \lambda_2 \| \beta \|_2^2 }
=
\argmin_{f\in\mathcal{F}_L} \del[2]{ P_n \rho_{f} + \lambda_1 \| \beta \|_1 + \lambda_2 \| \beta \|_2^2 }
\end{align*}
where $\lambda_1$ and $\lambda_2$ are positive constants. Hence, we are minimizing the empirical risk \textit{plus} an elastic net penalty. This form of penalty was originally introduced by \cite{zouh05} in the case of a linear regression model. The penalty is a compromise between the $\ell_1$-penalty of the plain Lasso and the squared $\ell_2$-loss in ridge regression. Ridge regression does not perform variable selection at all -- all estimated coefficients are non-zero. On the other hand, if two variables are highly correlated, the Lasso has a tendency to include only one of these. The elastic net strikes a balance between these two extremes and hence performs particularly well in the presence of highly correlated variables. This benefit has been formalized by \cite{hebirivdG11} in the case of quadratic loss. In particular they have shown that the elastic net behaves better with respect to certain restricted eigenvalue conditions than the plain Lasso.

\subsection{Assumptions}
We next turn to the assumptions needed to prove oracle inequalities for the elastic net. First, define $\mathbf{F}_{\text{local}}=\cbr[0]{f\in \mathbf{F}: \enVert[0]{f-f^0}_\infty\leq \eta}$ for some $\eta>0$ where $\enVert[0]{f}_\infty=\sup_{x\in\mathcal{X}}|f(x)|$ \footnote{Using the stronger $\enVert{\cdot}_\infty$ topology on $\mathbf{F}$ to define $\mathbf{F}_{\text{local}}$ instead of $\enVert{\cdot}$ turns out to be useful when verifying that the margin condition is satisfied with a quadratic margin in Section \ref{Examples}.}. The margin condition requires that in $\mathbf{F}_{\text{local}}\subseteq \mathbf{F}$ the excess loss $\Xi(f)$ is bounded from below by a convex function of $\enVert[0]{f-f^0}$.

\noindent{\bf Definition.}{\it We say that the {\bf margin condition} holds with strictly convex margin function $G(\cdot)$, if for all} $f \in \mathbf{F}_{\text{local}}$ {\it we have 
\[ \Xi (f) \ge G(\| f - f^0 \| ).\]}
In all examples we shall consider it can be shown that the margin condition holds for $G(u)=cu^2$ for some $c>0$ such that for all $f\in {\bf {F}}_{\text{local}}$, $\Xi(f)\geq c\| f - f^0 \|^2 $. More generally, we present a sufficient condition for $G$ to be quadratic in Section \ref{Examples}. The convex conjugate of $G(\cdot)$ will also play a role in the development of the oracle inequalities below. In particular, the following definition is taken from page 121 in \cite{vdGB11} and many more properties of convex conjugates can be found in \cite{rockafellar97}.

\noindent{\bf Definition}. {\it Let $G$ be a strictly convex function on $[0, \infty)$ with $G(0)=0$. The {\bf convex conjugate $H$} of $G$ is defined as
\[ H(v)= \sup_{u\geq 0} \{ u v - G(u) \}, \quad v \ge 0,\]}
Lemma \ref{Hprop} in the appendix establishes some properties of $H(v)$. Note also that if $G(u)=cu^2$, then $H(v)=v^2/(4c)$. Furthermore, from the definition of the convex conjugate
\begin{equation}
 u v \le G(u) + H(v).\label{fi}
\end{equation}
which is also known as Fenchel's inequality. Next, for any subset $S$ of $\cbr[0]{1,...,p}$ and $\beta\in \mathbb{R}^p$ we define $\beta_S$ such that $\beta_{S,j}=\beta_j1_{\cbr[0]{j\in S}}$ for $j=1,...,p$. Letting $|S|=s$ denote the cardinality of $S$ we may define

\noindent{\bf Definition}. {\it The {\bf adaptive restricted eigenvalue condition} is satisfied with if 
\begin{align*}
\phi^2(S)=\min\cbr[3]{\frac{\enVert[0]{f_\beta}^2}{\enVert[0]{\beta_S}_2^2} : \beta\in\mathbb{R}^p\setminus \cbr[0]{0}, \|\beta_{S^c} \|_1 \le L_n  \| \beta_S \|_2 }>0
\end{align*}
where $L_n= 3(\sqrt{|S|}+ \frac{2 \lambda_2 \|\beta_S\|_2}{\lambda_1})$}. As mentioned already, in many econometric examples one may choose $\enVert[0]{\cdot}$ to be the $L_2(P)$-norm. In this case, and if the covariates are also identically distributed (as will be assumed in our concrete examples in Section \ref{Examples}), $\enVert[0]{f_\beta}^2=\beta'\Sigma\beta$ where $\Sigma=E(\psi(X_1)\psi'(X_1))$. Hence, 
\begin{align}
\phi^2(S)=\min\cbr[3]{\frac{\beta'\Sigma\beta}{\enVert[0]{\beta_S}_2^2} : \beta\in\mathbb{R}^p\setminus \cbr[0]{0}, \|\beta_{S^c} \|_1 \le L_n  \| \beta_S \|_2 }\label{reseig}
\end{align}
Since
\begin{align*}
\phi^2(S)
\geq
\min_{\beta\in\mathbb{R}^p}\frac{\beta'\Sigma\beta}{\enVert[0]{\beta_S}_2^2}
\geq
\min_{\beta\in\mathbb{R}^p}\frac{\beta'\Sigma\beta}{\enVert[0]{\beta}_2^2}
\end{align*}
for all $S\in\cbr[0]{1,...,p}$ the adaptive restricted eigenvalue condition is satisfied in particular when the smallest eigenvalue of the population covariance matrix is positive. However, since the minimum in (\ref{reseig}) is taken over a subset of $\mathbb{R}^p$ only, we may have $\phi(S)>0$ even when $\Sigma$ is singular. Note also that the adaptive restricted eigenvalue condition is used various guises in the literature and is similar to the eigenvalue conditions of \cite{bickelrt09} and \cite{hebirivdG11}.

Before defining what we understand by the oracle estimator, define $S_\beta=\cbr[0]{j:\beta_j\neq 0}$ as the subset of $\cbr[0]{1,...,p}$ containing the indices of the non-zero coefficients. Let $s_\beta=|S_\beta|$ denote the cardinality of this set. Then, letting $\Gamma$ denote a collection of subsets of $\cbr[0]{1,...,p}$, we define

\noindent {\bf Definition}  {\it The oracle estimator is defined as}
\begin{align}
\beta^* = \argmin_{\beta:S_{\beta} \in \Gamma}  \cbr[4]{3 \Xi (f_{\beta}) + 2 H \left(\frac{4 \lambda_1 \sqrt{s_{\beta}} +4 \lambda_2 \|\beta\|_2}{\phi (S_{\beta})}\right)}\label{DefOracle}.
\end{align}
Note that the definition of the oracle still leaves considerable freedom since $\Gamma$ is defined by the user -- a property which we shall utilize later when considering linear targets (see e.g. remark 2 after Theorems \ref{thm1} below). In the case where $\Gamma$ equals the power set of $\cbr[0]{1,...,p}$ the oracle estimator may equivalently be written as
\begin{align*}
\beta^* = \argmin_{\beta\in \mathbb{R}^p}  \cbr[4]{3 \Xi (f_{\beta}) + 2 H \left(\frac{4 \lambda_1 \sqrt{s_{\beta}} +4 \lambda_2 \|\beta\|_2}{\phi (S_{\beta})}\right)}.
\end{align*}  
The definition of the oracle in (\ref{DefOracle}) turns out to be convenient for technical reasons but it also has a useful interpretation as a tradeoff between approximation and estimation error: In the standard setting of a quadratic loss function with a linear target, i.e. $f^0(x)=x'\beta^0$, it is known that the squared $\ell_2$-estimation error of Lasso type estimators when estimating $p$ parameters, of which $s$ are non-zero, is of the order $\frac{s\log(p)}{n}$. In the case of quadratic loss in the beginning of this section one has $\Xi(f)=E(f(X_1)-f^0(X_1))^2$ if the sample is identically distributed and $X_1$ and $\epsilon_1$ are independent. So $G$ and hence $H$ are quadratic in the definition of the margin condition. Choosing $\lambda_2=\frac{\lambda_1\sqrt{s}}{2\enVert[0]{\beta}}$ and $\lambda_1$ of the order $\sqrt{\log(p)/n}$, which are both choices we shall adhere to in the sequel, one finds that $H(\cdot)$ is of the order $\frac{s\log(p)}{n}$. This is exactly the estimation error under quadratic loss and motivates coining $H(\cdot)$ the estimation error term. The term $\Xi(f_\beta)$ is referred to as the approximation error and (\ref{DefOracle}) shows that the oracle trades of these two terms: a lower approximation error can be obtained by increasing $s_\beta$ while this also implies estimating more parameters resulting in a higher estimation error.    

Finally, letting $S^*=S_{\beta^*},\ \phi_*=\phi(S^*),\ s^*=|S^*|$ and $f^*=f_{\beta^*}$ we denote the oracle bound (value of the objective function minimized by the oracle) by
\begin{align*}
2 \Delta^* := 3 \Xi (f^*) + 2 H \left(\frac{4 \lambda_1 \sqrt{s^*} + 4 \lambda_2 \|\beta^*\|_2}{\phi_*}\right). 
\end{align*}

The inequality in Theorem \ref{thm1} below will be valid on a random set which we introduce next. In Theorem \ref{pbound} we shall show that this set actually has a high probability by means of a suitable concentration inequality for suprema of empirical processes. Define the empirical process
\begin{align*}
\cbr[2]{V_n (\beta) = \frac{1}{n} \sum_{i=1}^n [\rho_{f_{\beta}} (Z_i) - E \rho_{f_{\beta}} (Z_i)],\ \beta\in\mathbb{R}^p}.
\end{align*}
Next, we introduce a local supremum of the empirical process in incremental form
\begin{equation}
 Z_{M} = \sup_{\| \beta - \beta^* \|_1 \le M} | V_n (\beta ) - V_n (\beta^*) |.\label{2}
\end{equation}
Then we define 
\[ M^* = \Delta^*/\lambda_0,\]
where $\lambda_0$ is a positive sequence and set
\begin{align*}
\tau = \{ Z_{M^*} \le \lambda_0 M^* \} = \{ Z_{M^*} \le \Delta^* \}.
\end{align*}
The set $\tau$ is the one we shall work on in Theorem \ref{thm1} below. Note in particular, that on $\tau$, $Z_M$ can not be larger than $\Delta^*$ which is the minimal value of the loss function  of the oracle.   

We are now ready to state our assumptions:

{\bf Assumption 1}.{\it Assume the margin condition  with strictly convex function $G(\cdot)$}.

{\bf Assumption 2}.{\it Assume that} $f^* \in \mathbf{F}_{\text{local}}$ \textit{and} $f_{\beta} \in  \mathbf{F}_{\text{local}}$ {\it for all $\|\beta - \beta^*\|_1 \le M^*$.} 

{\bf Assumption 3}.{\it Assume that adaptive restricted eigenvalue condition holds for $S^*$, i.e. $\phi(S^*)>0$.}

As discussed above, the margin condition, Assumption 1, regulates the behavior of the excess risk function. When $\mathbf{F}$ is equipped with the $L_2(P)$  we will see in Section \ref{Examples} that the margin condition is actually often satisfied with $G(\cdot)$ being quadratic. Put differently, the margin condition is satisfied in many examples with a quadratic margin.

Assumption 2 is a technical condition which enables us to use the margin condition for $f^*$. The first part requires that the oracle is a good approximation to $f^0$ in the sup-norm. Of course the validity of this statement depends on how well linear combinations of elements in $\cbr[0]{\psi_j}_{j=1}^p$ can approximate $f^0$. The validity also depends on the choice of $\Gamma$ in the definition of the oracle since the precise form of $f^*$ depends on this. For concrete choices of $\mathbf{F}$ one can make proper choices of bases that guarantee the desired degree of approximation. Note in particular, that it follows from remark 2 in Section \ref{secoracle}, that when $f^0$ is linear one can choose $\Gamma$ such that $f^*=f^0$ and so, a fortiori, $f^*\in\mathbf{F}_{\text{local}}$. The second part of Assumption 2 states that $f_\beta\in\mathbf{F}$ if $\beta$ is close to $\beta^*$. This is rather innocent by the triangle inequality. We will give more detailed sufficient conditions for Assumption 2 in Section \ref{Examples} for concrete econometric examples. In particular, if $\mathbf{F}$ consists of sufficiently smooth functions\footnote{To be concrete, we shall be considering a H\"{o}lder class of function to be defined precisely in Section \ref{Examples}.}, we shall exhibit concrete choices of bases $\cbr[0]{\psi_j}_{j=1}^\infty$ and collections of sets $\Gamma$ such that $f^*$ approximates $f^0$ to the desired degree. Assumption 3 has been discussed above and is valid when $\enVert{\cdot}$ is the $L^2(P)$-norm and $\Sigma=E(\psi(X_1)\psi'(X_1))$ has full rank.

\section{An Oracle Inequality}\label{secoracle}
In this section we extend Theorem 6.4 of \cite{vdGB11} from $\ell_1$ penalty (Lasso) to $\ell_1 + \ell_2^2$ penalty (Elastic Net). This is not a trivial extension since the basic inequality used to establish the result has to be altered considerably. More precisely, the inequality that ties the estimator to the oracle has to be modified. The second difference is that we need to use the adaptive restricted eigenvalue condition which is different from the compatibility condition used in the $\ell_1$-case.  Compared to the linear target with quadratic loss in \cite{hebirivdG11} estimated by the elastic net our proof cannot benefit from the augmented regressors idea since this idea relies crucially on the loss function being quadratic. In the case of general convex loss function the proof technique is entirely different and we use the margin condition, Fenchel's inequality, and a careful definition of the oracle instead. We would like to stress that Theorem \ref{thm1} below is purely deterministic in the sense that there are no probabilities attached to it. It is valid on the set $\tau$ to which we shall later attach a lower bound on its probability. It also
provides a finite sample result -- i.e. the result is valid for any sample size and not just asymptotically. 
\begin{thm}\label{thm1}
Suppose 
$\lambda_1$ satisfies $\lambda_1 \ge 8 \lambda_0$. Then on the set $\tau$, under Assumptions 1-3, we have 
\[ \Xi (\hat{f}) + \lambda_1 \| \hat{\beta} - \beta^* \|_1 + \lambda_2 \|\hat{\beta}_{S^*} - \beta_{S^*}^*\|_2^2\le 
4\Delta^*
=
6 \Xi (f^*) + 4  H \left( \frac{4 \lambda_1 \sqrt{s^*} +4 \lambda_2 \|\beta^*\|_2}{\phi_*} \right),\]
where $H(\cdot)$ is the convex conjugate of the function $G(\cdot)$ in the margin condition.
\end{thm}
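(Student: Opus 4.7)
The proof has two phases: first reduce to the case where $\hat\beta$ is $\ell_1$-close to $\beta^*$ by convexity of the penalized objective, then establish a deterministic oracle inequality on that local set using the standard decomposition of the elastic net penalty, the restricted eigenvalue condition, and Fenchel's inequality.

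\paragraph{Phase 1: localization via convexity.} The natural starting point is the basic inequality $P_n\rho_{\hat f}+\lambda_1\|\hat\beta\|_1+\lambda_2\|\hat\beta\|_2^2\le P_n\rho_{f^*}+\lambda_1\|\beta^*\|_1+\lambda_2\|\beta^*\|_2^2$ coming from optimality of $\hat\beta$. Rewriting $P_n\rho_f=V_n(\beta)+P\rho_f$, this becomes $\Xi(\hat f)-\Xi(f^*)\le V_n(\beta^*)-V_n(\hat\beta)+\lambda_1(\|\beta^*\|_1-\|\hat\beta\|_1)+\lambda_2(\|\beta^*\|_2^2-\|\hat\beta\|_2^2)$. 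On $\tau$ the stochastic term is controlled by $\Delta^*$ only when $\|\hat\beta-\beta^*\|_1\le M^*$, which is not assumed. I would therefore introduce $\tilde\beta=t\hat\beta+(1-t)\beta^*$ with $t=M^*/(M^*+\|\hat\beta-\beta^*\|_1)$, so that $\|\tilde\beta-\beta^*\|_1\le M^*$. Because $\rho_f$ is convex in $f$ and $f\mapsto f_\beta$ is linear in $\beta$, the map $\beta\mapsto P_n\rho_{f_\beta}+\lambda_1\|\beta\|_1+\lambda_2\|\beta\|_2^2$ is convex, and combining Jensen with optimality of $\hat\beta$ yields the same basic inequality with $\tilde\beta$ in place of $\hat\beta$. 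Assumption 2 then legitimises the use of the margin condition at $\tilde f=f_{\tilde\beta}$.

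\paragraph{Phase 2: deterministic oracle bound for $\tilde\beta$.} Letting $\delta=\tilde\beta-\beta^*$ and using $\beta^*_{(S^*)^c}=0$ one has $\|\beta^*\|_1-\|\tilde\beta\|_1\le\|\delta_{S^*}\|_1-\|\delta_{(S^*)^c}\|_1$, while expanding squares gives $\|\beta^*\|_2^2-\|\tilde\beta\|_2^2=-2\langle\beta^*_{S^*},\delta_{S^*}\rangle-\|\delta\|_2^2\le 2\|\beta^*\|_2\|\delta_{S^*}\|_2-\|\delta\|_2^2$ via Cauchy--Schwarz. Substituting into the basic inequality and moving $\lambda_1\|\delta_{(S^*)^c}\|_1+\lambda_2\|\delta\|_2^2$ to the left yields a key inequality of the form
\[\Xi(\tilde f)-\Xi(f^*)+\lambda_1\|\delta_{(S^*)^c}\|_1+\lambda_2\|\delta\|_2^2\ \le\ \Delta^*+\lambda_1\|\delta_{S^*}\|_1+2\lambda_2\|\beta^*\|_2\|\delta_{S^*}\|_2.\]
From this I would derive the cone inequality $\|\delta_{(S^*)^c}\|_1\le L_n\|\delta_{S^*}\|_2$ needed to trigger Assumption 3, splitting into the easy case where $\Delta^*$ already dominates the right-hand side and the case where the $\|\delta_{S^*}\|_2$-term dominates; in the latter case the surplus $\Xi(f^*)$ is reabsorbed using $\Xi(f^*)\le \tfrac23\Delta^*$. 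The adaptive restricted eigenvalue condition then supplies $\|\delta_{S^*}\|_2\le \|\tilde f-f^*\|/\phi_*$.

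\paragraph{Phase 3: Fenchel and transfer to $\hat\beta$.} The right-hand side of the key inequality is now bounded by $\Delta^*+\bigl((4\lambda_1\sqrt{s^*}+4\lambda_2\|\beta^*\|_2)/\phi_*\bigr)\cdot\tfrac12\|\tilde f-f^*\|$ up to constants. Using the triangle inequality $\|\tilde f-f^*\|\le\|\tilde f-f^0\|+\|f^*-f^0\|$, the margin condition $G(\|\cdot-f^0\|)\le\Xi(\cdot)$, and Fenchel's inequality $uv\le G(u/\alpha)+H(\alpha v)\le \alpha^{-1}\Xi(\cdot)+H(\alpha v)$ (valid for $\alpha\ge1$ since $G$ is convex with $G(0)=0$), I would split the cross term with a scaling that re-absorbs a fraction of $\Xi(\tilde f)$ on the left while producing $H\bigl((4\lambda_1\sqrt{s^*}+4\lambda_2\|\beta^*\|_2)/\phi_*\bigr)$ on the right. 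Re-arranging, the pieces add up to the oracle bound $4\Delta^*=6\Xi(f^*)+4H(\cdot)$. Finally, to pass from $\tilde\beta$ to $\hat\beta$: if $\|\hat\beta-\beta^*\|_1>M^*$ then $\|\tilde\beta-\beta^*\|_1=M^*$ and the established bound gives $\lambda_1 M^*\le 4\Delta^*=4\lambda_0 M^*$, i.e.\ $\lambda_1\le 4\lambda_0$, contradicting $\lambda_1\ge 8\lambda_0$; therefore $\tilde\beta=\hat\beta$ and the conclusion follows verbatim.

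The main obstacle I anticipate is calibrating the Fenchel scaling $\alpha$ and the $\ell_2^2$-to-cross-term trade in Phase 2 tightly enough so that the coefficients on $\Xi(f^*)$ and $H$ end up exactly $6$ and $4$ respectively rather than some looser multiple; this bookkeeping, together with the two-case splitting used to verify the cone condition, is the only truly delicate part of the argument.
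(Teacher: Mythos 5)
Your proposal is correct and follows essentially the same route as the paper's proof: localization via the convex combination $\tilde\beta=t\hat\beta+(1-t)\beta^*$, the basic inequality with the $\ell_1$/$\ell_2^2$ penalty decomposition, a two-case split on whether the $\delta_{S^*}$-terms dominate $\Delta^*$ (which is exactly how the paper triggers the cone condition and the adaptive restricted eigenvalue bound), and the triangle-inequality-plus-Fenchel-plus-margin step to absorb $\tilde\Xi/2$ and produce the $6\Xi(f^*)+4H(\cdot)$ constants. One small correction to your final transfer step: the contradiction with $\lambda_1\ge 8\lambda_0$ does not yield $\tilde\beta=\hat\beta$ (which would require $t=1$); it yields $\|\hat\beta-\beta^*\|_1\le M^*$ (via $\|\tilde\beta-\beta^*\|_1\le M^*/2$), after which one reruns the entire argument with $\hat\beta$ in place of $\tilde\beta$ --- now legitimate because $\hat\beta$ lies in the $\ell_1$-ball on which $Z_{M^*}$ controls the empirical process --- which is precisely what the paper does.
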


Note that Theorem \ref{thm1} provides an upper bound, $4\Delta^*$, on the excess loss of $\hat{f}$ in terms of the excess loss of the oracle $f^*$ as well as an extra term $H(\cdot)$, the estimation error, which is hopefully not too big. We shall comment much more on this extra term in the sequel. Theorem \ref{thm1} can also be used to give an upper bound on the $\ell_1$-estimation error. Due to its importance the theorem warrants some detailed remarks.
  
1. The result of Theorem \ref{thm1} reduces to the result for the Lasso in Theorem 6.4 of  \cite{vdGB11} when we set $\lambda_2=0$ except for the fact that our adaptive restricted eigenvalue condition is slightly stronger than their compatibility constraint. In that sense we generalize the oracle inequality of \cite{vdGB11}. Their oracle inequality is, with $\phi_{**}$ being a compatibility constant (see p.157, \cite{vdGB11}) 
\[\Xi (\hat{f}) + \lambda_1 \| \hat{\beta} - \beta^* \|_1 + \le 6 \Xi (f^*) + 4  H \left( \frac{4 \lambda_1 \sqrt{s^*} }{\phi_{**}} \right)\label{vdGB6.4}.\]
As mentioned, the only difference between their Theorem 6.4 and the result that can be deduced from our Theorem \ref{thm1} is that $\phi_{**}\geq\phi_{*}$. However, we also carried out the proofs of Theorem \ref{thm1} imposing the compatibility constraint instead of the adaptive restricted eigenvalue condition and Theorem \ref{thm1} then reduced to Theorem 6.4 in (\cite{vdGB11}) upon setting $\lambda_2=0$. This result can be obtained from the authors on demand. The reason that the adaptive restricted eigenvalue condition is used in the general elastic net is that it gives sharper bounds than the compatibility condition in the general elastic net case. More precisely, if the compatibility condition is used in our case we get an extra $\sqrt{s^*}$ term in front of $4 \lambda_2 \|\beta^*\|_2$ in the function $H(\cdot)$. 

2. Letting $\beta^{BL}$ denote $\argmin_{\beta}\rho_{f_\beta}$, i.e. the best linear approximation, and setting $\Gamma=S_{BL}=\cbr[0]{j:\beta^{BL}_j\neq 0}$ and choosing $\lambda_2=\frac{\lambda_1\sqrt{|\beta_{BL}|}}{2\enVert{\beta^{BL}}_2}$ it follows that 
\begin{align*}
\beta^* = \argmin_{\beta\in S_{BL}} \cbr[4]{ 3 \Xi (f_{\beta}) + 2 H \left(\frac{6 \lambda_1 \sqrt{|S_{BL}|}}{\phi (S_{BL})}\right)}
\end{align*}
Note how we have used our discretion in making a choice of $\Gamma$ which will turn out to be useful below. Since the second term in the definition of $\beta^*$ does not depend on $\beta$ in this case it follows that $\beta^*$ is the minimizer of $\Xi(f_\beta)$ which itself also is the minimizer of $\rho_{f_\beta}$. Hence, $\beta^*=\beta^{BL}$ in this case. It follows that under the conditions of Theorem \ref{thm1}
\begin{align}
\Xi (\hat{f}) + \lambda_1 \| \hat{\beta} - \beta^{BL} \|_1 + \lambda_2 \|\hat{\beta}_{S_{BL}} - \beta^{BL}_{S_{BL}}\|_2^2
\leq  
6 \Xi (f_{\beta^{BL}}) + 4  H \left( \frac{6\lambda_1 \sqrt{|S_{BL}|}}{\phi(S_{BL})} \right)\label{linapp}
\end{align}
So, in particular, Theorem \ref{thm1} can be used to provide upper bounds on the $\ell_1$-distance of $\hat{\beta}$ to $\beta^{BL}$ due to our freedom in defining the oracle $\beta^*$. If the target $f^0$ is also linear then clearly the best linear approximation equals the target implying that $\beta^{BL}=\beta^0$ and hence $f_{\beta^{BL}}=f^0$ and $\Xi(f_{\beta^{BL}})=\Xi(f^0)=0$. Using $S_{BL}=S^0=\cbr[0]{j:\beta^{0}_j\neq 0}$, inequality (\ref{linapp}) yields
\begin{align}
\Xi (\hat{f}) + \lambda_1 \| \hat{\beta} - \beta^0 \|_1 + \lambda_2 \|\hat{\beta}_{S^{0}} - \beta^0_{S^{0}}\|_2^2
\leq  
4  H \left( \frac{6\lambda_1 \sqrt{|S^{0}|}}{\phi(S^0)} \right).
\label{linapp2}
\end{align}
Hence, in case the target is linear, (\ref{linapp2}) in particular yields an upper bound on the $\ell_1$-estimation error which does not depend on the excess loss of the oracle. We shall make use of this fact in Section \ref{varsel} on variable selection. It is also worth pointing out that in practice one does not know $S^0$ and hence can't choose $\Gamma=S_{BL}=S^0$ in the development of (\ref{linapp2}). However, (\ref{linapp2}) is valid even without this knowledge since an even sharper upper bound follows from Theorem \ref{thm1} by choosing $\Gamma=\cbr[1]{A\subset\cbr[0]{1,...,p}: |A|\leq |S^0|}$, i.e subsets of $\cbr[0]{1,...,p}$ of cardinality at most $|S^0|$. This bound only relies on sparseness of the target and since $S^0$ is a member of $\Gamma$ (\ref{linapp2}) follows a fortiori.  
 
3. A key issue is to understand the effect of $\lambda_2$ on the excess risk. Clearly, the right hand side of Theorem \ref{thm1} is increasing in $\lambda_2$ through $H(\cdot)$ (by Lemma \ref{Hprop} in the appendix $H(\cdot)$ is non-decreasing). However, the same is the case for the left hand side through its multiplication onto the squared $\ell_2$-error. This illustrates a tradeoff in the size of $\lambda_2$.

4. Note also, that the very definition of the restricted set in the definition of the adaptive restricted eigenvalue condition also depends on $\lambda_2$ through $L_n$. In particular, increasing $\lambda_2$ increases the size of the set we are minimizing over in the definition of $\phi(S)$. This implies that choosing $\lambda_2$ too large may lead to $\phi(S)=0$, or at least undesirably small values of $\phi(S)$. Note that choosing $\lambda_2 = \frac{\lambda_1 \sqrt{s^*}}{2\| \beta^*\|_2}$ results in $L_n = 6 \sqrt{s^*}$.
So in this case the size of the restricted set only depends on the cardinality of the oracle. Here it is worth noticing that the sparser the oracle ($s^*$ small) the smaller will the restricted set be and the larger will $\phi_*$ be. 

5. In many econometric examples the margin condition (Assumption 1) is satisfied with a quadratic margin resulting in $H(v)=v^2/4c$ for a positive constant $c$ (as argued just after the definition of the convex conjugate). Setting $\lambda_2 = \frac{\lambda_1 \sqrt{s^*}}{2\| \beta^*\|_2}$ in Theorem \ref{thm1} results in
\[ \Xi (\hat{f}) + \lambda_1 \| \hat{\beta} - \beta^* \|_1 +\lambda_1 \frac{\sqrt{s^*}}{2\|\beta^*\|_2} \|\hat{\beta}_{S^*} - \beta_{s^*}\|_2^2\le 6 \Xi (f^*) 
+  \frac{36 \lambda_1^2 s^*}{c \phi_*^2}.\]
Note that for $\lambda_2=0$, corresponding to a pure $\ell_1$-loss, Theorem \ref{thm1} reduces to 
\[\Xi (\hat{f}) + \lambda_1 \| \hat{\beta} - \beta^* \|_1 + \leq 6 \Xi (f^*) +  \frac{16 \lambda_1^2 s^*}{c \phi_*^2}.\]

%Comparing the two inequalities we expect elastic net to do better than lasso in terms of risk and $\ell_1$ estimation error when 
%\[ \frac{40 \lambda_1 \sqrt{s^*} \|\beta^*\|_2}{c \phi_*^2 \| \hat{\beta}_{S^*} - \beta_{S^*} \|_2^2} \le 1.\]
%Note that a model with smaller non-zero coefficients, (corresponding to smaller $\|\beta^*\|_2$) favors the elastic net. This is most likely due to the extra shrinkage term in the elastic net as compared to the Lasso.
 
Recall that Theorem \ref{thm1} and the remarks following it are valid on the set $\tau = \cbr[0]{ Z_{M^*} \le \lambda_0 M^*}= \cbr[0]{ Z_{M^*} \le \Delta^*}$. As a consequence, we would like $\tau$ to have a large probability. This can be achieved by choosing $\lambda_0$ large. However, note that Theorem \ref{thm1} supposes $\lambda_1\geq 8\lambda_0$ such that the right hand side of Theorem \ref{thm1} is also increasing in $\lambda_0$. Put differently, there is a tradeoff between the tightness of the bound in Theorem \ref{thm1} and the probability with which the bound holds. In the following we shall give a lower bound on the probability of $\tau$ which trades off these two effects.

Assume that $\rho_{f}(x,y) = \gamma (y, f (x)) +c (f)$ where $c(f)$ is a constant possibly depending on $f$. It will always be zero in our examples. We further assume that there exists a $D>0$ such that
\begin{align}
|\gamma (y, f_\beta(x))-\gamma (y, f_{\tilde{\beta}} (x))| \leq D|f_\beta(x)-f_{\tilde{\beta}}(x)|\label{lipschitz}
\end{align} 
for all $(x,y)\in \mathcal{X}\times \mathcal{Y}$ and $\beta,\tilde{\beta}\in \Phi$. In other words $\gamma(\cdot,\cdot)$ is assumed to be Lipschitz continuous in its second argument over $\mathcal{F}_L$ with Lipschitz constant $D$. The reason we only need Lipschitz continuity over $\mathcal{F}_L$ is that it is used for a contraction inequality in connection with bounding the local supremum of the empirical process $Z_M$ in (\ref{2}). Assume furthermore that
\begin{equation}
 \frac{1}{n} \sum_{i=1}^n \max_{ 1 \le j \le p} E \psi_j^2 (X_i) \le 1,\label{4.2}
\end{equation}
and for a positive constant $K$
\begin{equation}
 \max_{1 \le j \le p} \| \psi_j  \|_{\infty} \le K.\label{4.3}
\end{equation}
Note that (\ref{4.3}) implies (\ref{4.2}) when $K=1$. Assuming $ \max_{1 \le j \le p} \| \psi_j  \|_{\infty} \le K$ is rather innocent since many commonly used basis functions are bounded. As we shall see in Section \ref{Examples} the most critical assumption in concrete examples is the Lipschitz continuity of the loss function. With this notation in place we state the following result which builds on Theorem 14.5 \cite{vdGB11} (see also Corollary A.1 in \cite{vdG08}).

\begin{thm}\label{pbound}
Assume that (\ref{lipschitz})-(\ref{4.3}) are valid and that $p\geq 2$. Assume furthermore, that $\log(p)\leq n$. Then, there exists a constant $d>0$ such that choosing $\lambda_0=dD\sqrt{\frac{\log(p)}{n}}$ yields
\begin{align*}
P(\tau)\geq 1-\del[2]{\frac{1}{p}}
\end{align*} 
\end{thm}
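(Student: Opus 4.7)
The plan is to combine the standard sequence of tools for suprema of empirical processes -- symmetrization, the Ledoux--Talagrand contraction principle, a maximal inequality for $p$ Rademacher averages, and Talagrand's concentration inequality -- exactly as they are packaged in Theorem 14.5 of \cite{vdGB11} and Corollary A.1 of \cite{vdG08}. First I would observe that since $c(f)$ in $\rho_f(z)=\gamma(y,f(x))+c(f)$ does not depend on $z$, it is annihilated by the centering, so
\[
V_n(\beta)-V_n(\beta^*) = \frac{1}{n}\sum_{i=1}^n\bigl\{\gamma(Y_i,f_\beta(X_i))-\gamma(Y_i,f_{\beta^*}(X_i))-E[\gamma(Y_i,f_\beta(X_i))-\gamma(Y_i,f_{\beta^*}(X_i))]\bigr\}.
\]
Symmetrization replaces this centered sum by a Rademacher average indexed over the same $\ell_1$-ball:
\[
E Z_{M^*} \le 2\, E\sup_{\|\beta-\beta^*\|_1\le M^*}\left|\frac{1}{n}\sum_{i=1}^n \epsilon_i\bigl[\gamma(Y_i,f_\beta(X_i))-\gamma(Y_i,f_{\beta^*}(X_i))\bigr]\right|.
\]

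Since by (\ref{lipschitz}) the map $u\mapsto\gamma(y,u)$ is Lipschitz with constant $D$ on $\mathcal{F}_L$, the contraction principle, applied conditionally on $(X_i,Y_i)_{i=1}^n$, peels off $\gamma$ at the price of a factor of $2D$. Using $f_\beta(X_i)-f_{\beta^*}(X_i)=(\beta-\beta^*)'\psi(X_i)$ together with the duality $|(\beta-\beta^*)'v|\le\|\beta-\beta^*\|_1\|v\|_\infty$, I obtain
\[
E Z_{M^*}\le 4 D M^* \cdot E\max_{1\le j\le p}\left|\frac{1}{n}\sum_{i=1}^n\epsilon_i\psi_j(X_i)\right|.
\]
Under assumptions (\ref{4.2})--(\ref{4.3}), a classical maximal inequality for Rademacher averages (Hoeffding applied coordinate-wise plus a union bound, or Nemirovski's inequality using the variance bound) controls the right-hand expectation by $C\sqrt{\log(p)/n}$ for a constant $C$ depending only on $K$. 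Hence $E Z_{M^*}\le C_1 D M^*\sqrt{\log(p)/n}$.

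To upgrade the expectation bound to a high-probability statement I would apply Talagrand's concentration inequality in the Bousquet form (which is Theorem 14.5 of \cite{vdGB11}). The class $\{\gamma(y,f_\beta(x))-\gamma(y,f_{\beta^*}(x)):\|\beta-\beta^*\|_1\le M^*\}$ has envelope at most $D K M^*$ (Lipschitz plus $\|\psi_j\|_\infty\le K$) and variance per summand bounded by a constant multiple of $D^2 M^{*2}$. Talagrand then gives, with probability at least $1-e^{-t}$,
\[
Z_{M^*}\le 2\,E Z_{M^*}+c_2 D M^*\sqrt{t/n}+c_3 D K M^*\, t/n.
\]
Setting $t=\log p$ and invoking $\log p\le n$ collapses the last term into the $\sqrt{t/n}$ term, so $Z_{M^*}\le c_0 D M^*\sqrt{\log(p)/n}$ with probability at least $1-1/p$. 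Choosing $d\ge c_0$ in $\lambda_0=dD\sqrt{\log(p)/n}$ then yields $Z_{M^*}\le \lambda_0 M^*$ on that event, i.e. $P(\tau)\ge 1-1/p$.

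The main obstacle is bookkeeping rather than ideas: verifying that the Lipschitz hypothesis (\ref{lipschitz}), which is only posited on $\mathcal{F}_L$, applies to every $\beta$ with $\|\beta-\beta^*\|_1\le M^*$ so that contraction legally applies, and tracking that the envelope and variance constants entering Bousquet's bound are controlled by $K$ and $D$ alone. The hypothesis $\log p\le n$ is precisely what is needed at the end to dominate the linear-in-$t/n$ Bernstein term by the sub-Gaussian $\sqrt{t/n}$ term, matching the rate $\sqrt{\log(p)/n}$ prescribed for $\lambda_0$.
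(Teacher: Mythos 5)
Your proposal is correct and follows essentially the same route as the paper: the paper simply invokes Theorem 14.5 of \cite{vdGB11} as a packaged bound $P(Z_{M}\le M\zeta)\ge 1-e^{-t}$ and then sets $t=\log(p)$ and checks $\zeta\le \tilde{c}D\sqrt{\log(p)/n}$ using $\log(p)\le n$, whereas you unpack that theorem into its ingredients (symmetrization, contraction via (\ref{lipschitz}), the maximal inequality under (\ref{4.2})--(\ref{4.3}), and Bousquet's concentration) before doing the identical final bookkeeping. The substance and the role of each hypothesis match the paper's argument.
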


The assumption $p\geq 2$ is made for purely technical reasons and does not exclude any interesting problems. Similarly, $\log(p)\leq n$ still allows $p$ to increase at an exponential rate in the sample size.\footnote{This is from an asymptotic point of view, though we wish to emphasize that the inequality in Theorem \ref{pbound} holds for any given sample size (satisfying the conditions of the theorem).} From an asymptotic point of view Theorem \ref{pbound} reveals that the measure of the set $\tau$, on which the inequality in Theorem \ref{thm1} is valid, tends to 1 as $p\to\infty$. In order to also cover the case of fixed $p$ one can choose  $\lambda_0=dD\sqrt{\frac{\log(p)\log(n)}{n}}$ in Theorem \ref{pbound} to obtain $P(\tau)\geq 1-\del[2]{\frac{1}{p}}^{\log(n)}$ by a slight modification of the proof of Theorem \ref{pbound} which tends to one as $n\to\infty$ even for $p$ fixed. But since Theorem \ref{thm1} requires $\lambda_1\geq 8\lambda_0$ the this will yield a bigger upper bound in that Theorem since by Lemma \ref{Hprop} in the Appendix $H(\cdot)$ is a non-decreasing function. Combining Theorems \ref{thm1} and \ref{pbound} yields the following result.
\begin{thm}\label{thm3}
Under the assumptions of Theorems \ref{thm1} and \ref{pbound} with the choice of $\lambda_0=dD\sqrt{\frac{\log(p)}{n}}$ for some positive constant $d$ it holds with probability at least $1-\del{\frac{1}{p}}$
\begin{align*}
\Xi (\hat{f}) + \lambda_1 \| \hat{\beta} - \beta^* \|_1 + \lambda_2 \|\hat{\beta}_{S^*} - \beta_{S^*}^*\|_2^2\le 
4\Delta^*
=
6 \Xi (f^*) + 4  H \left( \frac{4 \lambda_1 \sqrt{s^*} +4 \lambda_2 \|\beta^*\|_2}{\phi_*} \right)
\end{align*}
where $H(\cdot)$ is the convex conjugate of $G(\cdot)$.
\end{thm}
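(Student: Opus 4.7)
The plan is to combine Theorem \ref{thm1} and Theorem \ref{pbound} directly; Theorem \ref{thm3} is essentially a probabilistic restatement of the deterministic bound in Theorem \ref{thm1}, with the probability supplied by Theorem \ref{pbound}. Since the hypotheses of Theorem \ref{thm3} explicitly include all the hypotheses of both earlier theorems, no additional verification is required.

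First I would invoke Theorem \ref{pbound}. Its conditions (the Lipschitz property (\ref{lipschitz}), the moment bound (\ref{4.2}), the boundedness condition (\ref{4.3}), $p \ge 2$, and $\log(p) \le n$) are assumed. With the choice $\lambda_0 = dD\sqrt{\log(p)/n}$ for the constant $d$ produced by Theorem \ref{pbound}, this gives
\[
P(\tau) \ge 1 - \frac{1}{p},
\]
where $\tau = \{Z_{M^*} \le \lambda_0 M^*\} = \{Z_{M^*} \le \Delta^*\}$.

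Next I would apply Theorem \ref{thm1} on the event $\tau$. Assumptions 1--3 are in force, and by hypothesis $\lambda_1 \ge 8\lambda_0$. Theorem \ref{thm1} is a purely deterministic statement that holds pointwise on $\tau$, so on this event
\[
\Xi(\hat{f}) + \lambda_1 \|\hat{\beta} - \beta^*\|_1 + \lambda_2 \|\hat{\beta}_{S^*} - \beta^*_{S^*}\|_2^2 \le 4\Delta^* = 6\Xi(f^*) + 4H\!\left(\frac{4\lambda_1\sqrt{s^*} + 4\lambda_2\|\beta^*\|_2}{\phi_*}\right).
\]
Since this inequality holds pointwise for every $\omega \in \tau$, and $P(\tau) \ge 1 - 1/p$, the inequality holds with probability at least $1 - 1/p$, which is the claim.

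There is no real obstacle here: the substantive work was done in Theorems \ref{thm1} and \ref{pbound}. The only thing to be careful about is the implicit compatibility of the two sets of assumptions, in particular that the $\lambda_0$ supplied by Theorem \ref{pbound} is the same $\lambda_0$ appearing in the definition of $M^* = \Delta^*/\lambda_0$ used in Theorem \ref{thm1}, and that the condition $\lambda_1 \ge 8\lambda_0$ is imposed on the specific scale $\lambda_0 = dD\sqrt{\log(p)/n}$. Both are inherited from the hypothesis ``under the assumptions of Theorems \ref{thm1} and \ref{pbound}.''
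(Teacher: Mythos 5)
Your proposal is correct and follows exactly the paper's own argument: the paper's proof of Theorem \ref{thm3} likewise simply notes that the bound of Theorem \ref{thm1} holds on $\tau$ and that Theorem \ref{pbound} supplies $P(\tau)\geq 1-\frac{1}{p}$. Your additional remarks about the compatibility of the two $\lambda_0$'s are a sensible (if implicit in the paper) point of care, but nothing further is needed.
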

Theorem \ref{thm3} basically consists of the inequality in Theorem \ref{thm1} with a lower bound attached to the measure of the set $\tau$ on which Theorem \ref{thm1} is valid. In particular, the theorem reveals that the excess loss of, $\Xi(\hat{f})$, will not be much larger than the one of the oracle. The second term on the right hand side reflects the estimation error. Put differently, the excess loss of our estimator depends on the excess loss of the oracle as well as the distance to the oracle. From an asymptotic point of view Theorem \ref{pbound} reveals that the measure of the set $\tau$ on which the inequality in Theorem \ref{thm1} is valid tends to 1 as $p\to\infty$. In particular, we have the following result for the asymptotic excess loss of $\hat{f}$. To this end assume that $p\in O(\exp(n^a))$ and $|S^*|\in O(n^b)$ for some $a>0$ and $b\geq 0$.

\begin{corollary}\label{corloss}
Assume that $\lambda_1\leq L\lambda_0$ for some $L\geq 8$. Then, under the assumptions of Theorems \ref{thm1} and \ref{pbound}, with $\lambda_2 = \lambda_1 \sqrt{s^*}/2 \|\beta^*\|_2$  and  the choice of $\lambda_0=dD\sqrt{\frac{\log(p)}{n}}$ for some positive constant $d$ one has
\begin{align*}
\limsup_{n\to\infty} \Xi (\hat{f})\leq 6 \limsup_{n\to\infty}\Xi (f^*)
\end{align*}
with probability approaching one if $a+b<1$ and $\phi_*$ is bounded away from zero. 
\end{corollary}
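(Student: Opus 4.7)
The plan is to apply Theorem \ref{thm3} directly and then show that the estimation-error term on the right-hand side vanishes asymptotically, leaving only the oracle approximation error. By Theorem \ref{thm3}, with probability at least $1-1/p$,
\[\Xi(\hat f) \le 6\Xi(f^*) + 4H\!\left(\frac{4\lambda_1\sqrt{s^*}+4\lambda_2\|\beta^*\|_2}{\phi_*}\right),\]
so it suffices to show that the argument of $H$ vanishes as $n\to\infty$ and then to invoke continuity of $H$ at zero.

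Substituting the prescribed $\lambda_2 = \lambda_1\sqrt{s^*}/(2\|\beta^*\|_2)$ reduces the argument of $H$ to $6\lambda_1\sqrt{s^*}/\phi_*$. Combining $\lambda_1 \le L\lambda_0 = LdD\sqrt{\log(p)/n}$ with $p \in O(\exp(n^a))$, $s^* \in O(n^b)$, and $\phi_*$ bounded away from zero gives
\[\frac{\lambda_1\sqrt{s^*}}{\phi_*} = O\!\left(\sqrt{\frac{\log(p)\,s^*}{n}}\right) = O\!\left(n^{(a+b-1)/2}\right) \longrightarrow 0\]
since $a+b<1$. By the properties of the convex conjugate collected in Lemma \ref{Hprop} in the appendix, $H$ is non-decreasing and convex on $[0,\infty)$ with $H(0)=\sup_{u\ge 0}\{-G(u)\}=0$ (using $G(0)=0$ and the non-negativity of $G$ implicit in the margin condition), and is therefore right-continuous at zero. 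Hence $H(6\lambda_1\sqrt{s^*}/\phi_*)\to 0$.

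Finally, I would take $\limsup_{n\to\infty}$ on both sides of the oracle inequality on the set $\tau$; the $H$ term vanishes by the previous paragraph, yielding $\limsup_{n\to\infty}\Xi(\hat f) \le 6\limsup_{n\to\infty}\Xi(f^*)$. Because $P(\tau)\ge 1-1/p\to 1$ as $p\to\infty$ under the assumed growth, this inequality holds with probability tending to one. The only conceptual step beyond invoking Theorem \ref{thm3} is verifying the right-continuity of $H$ at zero, which is the main — but minor — obstacle; everything else is a direct rate calculation from the prescribed choices of $\lambda_0$, $\lambda_1$ and $\lambda_2$.
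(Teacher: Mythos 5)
Your proposal is correct and follows essentially the same route as the paper's own proof: invoke Theorem \ref{thm3}, note that the choice $\lambda_2=\lambda_1\sqrt{s^*}/(2\|\beta^*\|_2)$ reduces the argument of $H$ to $6\lambda_1\sqrt{s^*}/\phi_*$, show this is $O(n^{(a+b-1)/2})=o(1)$ under $a+b<1$, and conclude via the right-continuity of $H$ at zero from Lemma \ref{Hprop} together with $P(\tau)\to 1$. Your explicit remark that $H(0)=0$ is needed (not just right-continuity) is a small point the paper glosses over, but it does not change the argument.
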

Corollary \ref{corloss} shows that asymptotically the excess loss of $\hat{f}$ will be of the same order as that of the oracle. This is useful since we saw in remark 2 above that we have considerable discretion in choosing $f^*$ and hence in what we bound $\Xi(\hat{f})$ by from above in Corollary \ref{corloss}. In the case where $f^0$ is linear we know from Remark 2 above that we can choose $\Gamma$ such that $f^*=f^0$ and hence $\Xi(f^*)=0$. In this case Corollary \ref{corloss} actually reveals that the excess loss of $\hat{f}$ tends to zero. We next investigate the case of linear $f^0$ in more detail.

\subsection{Linear target}\label{lintar}
In the case where the target function $f^0$ is linear Theorem \ref{thm3} can be used to deduce the following result.

\begin{corollary}\label{corlin}
Assume that $f^0$ is linear. 

a) Then, under the assumptions of Theorems \ref{thm1} and \ref{pbound}
\begin{align}
\Xi (\hat{f})
&\leq 
4H \left( \frac{4 \lambda_1 \sqrt{|S^0|} +4 \lambda_2 \|\beta^0\|_2}{\phi (S^0)} \right)\label{loss1}\\
 \| \hat{\beta} - \beta^0\|_1
&\leq 
\frac{4}{\lambda_1}H \left( \frac{4 \lambda_1 \sqrt{|S^0|} +4 \lambda_2 \|\beta^0\|_2}{\phi (S^0)} \right)\label{lin1}
\end{align}
with probability at least  $1-\del{\frac{1}{p}}$.

b) If, furthermore, the margin is quadratic such that $H(v)=v^2/(4c)$ for some $c>0$, and we choose $\lambda_2 = \frac{\lambda_1 \sqrt{|S^0|}}{2\| \beta^0\|_2}$ as well as $\lambda_1\leq L\lambda_0$ for some $L\geq 8$, then, by (\ref{loss1}) and (\ref{lin1}) 
\begin{align}
\Xi (\hat{f})
\leq 
\frac{36}{c}\frac{\lambda_1^2|S^0|}{\phi^2(S^0)}
\leq 
\frac{36}{c}\frac{(LdD)^2\log(p)|S^0|}{n\phi^2(S^0)}\label{loss2}\\
\| \hat{\beta} - \beta^0 \|_1
\leq 
\frac{36L}{c}\frac{\lambda_1|S^0|}{\phi^2(S^0)}
\leq 
\frac{36LdD}{c} \sqrt{\frac{\log(p)}{n}}\frac{|S^0|}{\phi^2(S^0)}\label{lin2}
\end{align}
with probability at least  $1-\del{\frac{1}{p}}$.
\end{corollary}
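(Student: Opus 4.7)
Both parts follow by specializing Theorem \ref{thm3} to the linear-target case and exploiting the flexibility in the oracle definition emphasized in Remark 2 following Theorem \ref{thm1}. The proof of Theorem \ref{thm1} really delivers its inequality for any admissible $\beta$ whose support lies in $\Gamma$, the oracle $\beta^*$ being simply the best such $\beta$; since $\Gamma$ is at the user's disposal, we may choose it (for instance $\Gamma=\{A\subseteq\{1,\ldots,p\}:|A|\leq |S^0|\}$) so that $S^0\in\Gamma$ and the substitution $\beta^*=\beta^0$ is legitimate.

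For part (a), making this substitution in Theorem \ref{thm3} identifies $S^*$ with $S^0$, $s^*$ with $|S^0|$, $\phi_*$ with $\phi(S^0)$, and $f^*$ with $f^0$. Because $f^0$ is the unconstrained minimizer of $P\rho_f$ we have $\Xi(f^*)=0$, so the $6\Xi(f^*)$ term drops out and Theorem \ref{thm3} reduces to
$$\Xi(\hat{f})+\lambda_1\|\hat{\beta}-\beta^0\|_1+\lambda_2\|\hat{\beta}_{S^0}-\beta^0_{S^0}\|_2^2\leq 4H\!\left(\frac{4\lambda_1\sqrt{|S^0|}+4\lambda_2\|\beta^0\|_2}{\phi(S^0)}\right)$$
with probability at least $1-1/p$. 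Discarding the two non-negative penalty terms on the left produces (\ref{loss1}); discarding $\Xi(\hat{f})$ and the $\ell_2^2$ term and then dividing by $\lambda_1$ produces (\ref{lin1}).

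Part (b) is purely arithmetic. With $\lambda_2=\lambda_1\sqrt{|S^0|}/(2\|\beta^0\|_2)$ the numerator inside $H$ collapses to $4\lambda_1\sqrt{|S^0|}+2\lambda_1\sqrt{|S^0|}=6\lambda_1\sqrt{|S^0|}$, and the quadratic-margin formula $H(v)=v^2/(4c)$ turns $4H(6\lambda_1\sqrt{|S^0|}/\phi(S^0))$ into $36\lambda_1^2|S^0|/(c\phi^2(S^0))$. This gives the first inequality in (\ref{loss2}); dividing by $\lambda_1$ gives the analogous bound underlying (\ref{lin2}). Inserting $\lambda_1\leq L\lambda_0=LdD\sqrt{\log(p)/n}$ then delivers the second inequality in each display. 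The only conceptual step is the identification of the oracle with $\beta^0$ described in the first paragraph; everything else is bookkeeping, so the main obstacle is really just a careful reading of the proof of Theorem \ref{thm1} to confirm that the inequality does hold for arbitrary admissible $\beta$ rather than only for the particular minimizer $\beta^*$.
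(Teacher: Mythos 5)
Your proposal is correct and follows essentially the same route as the paper: invoke the freedom in choosing $\Gamma$ (Remark 2) so that the oracle bound can be evaluated at $\beta^0$ with $\Xi(f^*)=0$, apply Theorem \ref{thm3}, drop the non-negative terms on the left, and then substitute $\lambda_2=\lambda_1\sqrt{|S^0|}/(2\|\beta^0\|_2)$, $H(v)=v^2/(4c)$ and $\lambda_1\leq L\lambda_0=LdD\sqrt{\log(p)/n}$ for part (b). Your explicit remark that the inequality of Theorem \ref{thm1} holds for any admissible candidate with support in $\Gamma$ (not only the exact minimizer) is a slightly more careful justification than the paper's, and your direct computation even yields the marginally sharper constant $36\lambda_1|S^0|/(c\phi^2(S^0))$ in (\ref{lin2}), which implies the stated bound since $L\geq 8$.
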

The bounds (\ref{loss1}) and (\ref{lin1}) bound the $\ell_1$-estimation error of the elastic net estimator for any type of loss function satisfying the conditions of Theorem \ref{thm1}. Note that there is no excess loss from the oracle entering in the upper bound. This is due to the fact that this is zero when the target is linear. The last two bounds bounds in Corollary \ref{corlin} specialize to the case where the quadratic margin condition is satisfied. We stress again, as we shall see later (see Section \ref{Examples}), that the margin condition is indeed quadratic in many econometric examples. 

Furthermore, one sees from the above Corollary that the rate of convergence of the elastic net estimator in the $\ell_1$-norm is $\frac{\sqrt{n}}{\sqrt{\log(p)}}\frac{1}{|S^0|}$provided that the adaptive restricted eigenvalue is bounded away from zero. %This is the same rate as found for the Lasso in Corollary 1 of \cite{belloni2012sparse}. So this generalizes their result for the Lasso with quadratic loss functions to the elastic net with merely a quadratic margin. 

Furthermore, (\ref{lin2}) can be used to deduce consistency of $\hat{\beta}$ for $\beta^0$. As in Corollary \ref{corloss} assume that $p =O(\exp(n^a))$ and $|S^0| = O(n^b)$ for some $a>0$ and $b\geq 0$.

\begin{corollary}\label{corlinasym}
Assume that $f^0$ is linear and set $\lambda_2 = \frac{\lambda_1 \sqrt{|S^0|}}{2\| \beta^*\|_2}$ as well as $\lambda_1\leq L\lambda_0$ for some $L\geq 8$. Let the quadratic margin condition be satisfied and assume furthermore that $\phi^2(S^0)$ is bounded away from 0. Then, under the assumptions of Theorems \ref{thm1} and \ref{pbound},
\begin{align*}
 \| \hat{\beta} - \beta^0 \|_1 \stackrel{p}{\to} 0 
\end{align*}
if $a+2b<1$.
\end{corollary}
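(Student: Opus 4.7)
The plan is to deduce the claim directly from the $\ell_1$ bound (\ref{lin2}) in Corollary \ref{corlin}(b) by showing that its right-hand side vanishes under the stated rate conditions while the exceptional probability $1/p$ shrinks. Because $f^0$ is linear, Remark 2 following Theorem \ref{thm1} permits the choice of $\Gamma$ that forces $\beta^* = \beta^0$, so $S^* = S^0$, $\phi_* = \phi(S^0)$, and the prescribed $\lambda_2 = \lambda_1\sqrt{|S^0|}/(2\|\beta^*\|_2)$ coincides with the form required by Corollary \ref{corlin}(b). I would first check that the hypotheses of Theorems \ref{thm1} and \ref{pbound} are in force for $n$ large: the condition $a + 2b < 1$ with $b \ge 0$ forces $a < 1$, hence $\log(p) \le C_1 n^a \le n$ eventually, which is the one side condition of Theorem \ref{pbound} that is not already assumed in the statement.

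With the hypotheses in place, the event on which
\[
\|\hat{\beta} - \beta^0\|_1 \leq \frac{36 L d D}{c}\sqrt{\frac{\log(p)}{n}}\,\frac{|S^0|}{\phi^2(S^0)}
\]
holds has probability at least $1 - 1/p$. The rest is rate bookkeeping. Since $\phi^2(S^0)$ is bounded away from zero by assumption, $1/\phi^2(S^0) = O(1)$, and combining $\log(p) \leq C_1 n^a$ with $|S^0| \leq C_2 n^b$ gives
\[
\sqrt{\log(p)/n}\,|S^0| \leq \sqrt{C_1}\, C_2 \, n^{(a-1)/2 + b} = \sqrt{C_1}\, C_2\, n^{(a + 2b - 1)/2}.
\]
Because $a + 2b < 1$, the exponent $(a + 2b - 1)/2$ is strictly negative, so the right-hand side of the $\ell_1$ bound converges to zero deterministically.

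Finally, $p = O(\exp(n^a))$ with $a > 0$ yields $p \to \infty$ and thus $1/p \to 0$, so the event on which the deterministic bound is valid has probability tending to one. Combining these two facts gives $\|\hat{\beta} - \beta^0\|_1 \xrightarrow{p} 0$, as claimed. There is no real obstacle here beyond arithmetic with exponents; the substantive work was already carried out in Corollary \ref{corlin}. It is worth noting, as a sanity check on the condition, that the $\ell_1$ rate $n^{(a + 2b - 1)/2}$ is slower than the excess-risk rate $n^{a + b - 1}$ appearing in Corollary \ref{corloss}, which is exactly why the present corollary requires the stronger sparsity restriction $a + 2b < 1$ rather than $a + b < 1$.
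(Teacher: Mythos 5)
Your proposal is correct and follows essentially the same route as the paper: it reads off the $\ell_1$ bound (\ref{lin2}) from Corollary \ref{corlin}(b), notes that the exceptional probability $1/p$ vanishes, and checks that $\sqrt{\log(p)/n}\,|S^0| = O(n^{(a+2b-1)/2}) = o(1)$ under $a+2b<1$ with $\phi^2(S^0)$ bounded away from zero. The extra verifications you include (that $a<1$ guarantees $\log(p)\leq n$ eventually, and the rate comparison with Corollary \ref{corloss}) are sound but not needed beyond what the paper's own two-line argument already contains.
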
  
Corollary \ref{corlinasym} shows that the elastic net can be consistent even when the dimension $p$ increases at a subexponential rate in the sample size. Note, however, that the number of relevant variables, $|S^0|$ can not increase faster than the square root of the sample size ($a$ can be put arbitrarily close to 0 to see this). Hence, even though the total number of variables can be very large, the number of relevant variables must still be quite low. This is in line with previous findings for the linear model in the literature. We also remark that this requirement is slightly stricter than the one needed when considering the excess loss in Corollary \ref{corloss} (in that corollary we only needed $a+b<1$). Also note that the conditions in Corollary \ref{corlinasym} are merely sufficient. For example one can let $\phi^2(S^0)$ tend to zero at the price of reducing the growth rate of $p$ and $|S^0|$.

\section{Variable Selection}\label{varsel}
In this section we briefly comment on how the results in Section \ref{secoracle} can be used to perform consistent variable selection in the case where $f^0$ is a linear function\footnote{If the target function is not linear we do not find it sensible to talk about consistent variable selection in a linear approximation of the target. Hence, this section restricts attention to the case where the target is linear.}. First note, that the results in Corollaries \ref{corlin} and \ref{corlinasym} can be used to provide rates of convergence of $\hat{\beta}$ for $\beta^0$ in in the $\ell_1$-norm in the case of a linear target. If one furthermore assumes that $\min\cbr[0]{|\beta_j^0|:\beta_j^0\neq 0}$ is bounded away from zero by at least the rate of convergence of $\hat{\beta}$ it follows by standard arguments, see \cite{lounici08} or \cite{kock2013oracle}, that no non-zero $\beta^0$ will be classified as such. Put differently, the elastic net possesses the \textit{screening property}.

In order to remove all non-zero variables one may furthermore threshold the elastic net estimator by removing all variables with parameters below a certain threshold. Again standard arguments show that choosing the threshold of the order of the rate of convergence (details omitted) can yield consistent model selection asymptotically. Since thresholding is a generic technique which is not specific to our setup we shall not elaborate further on this at this stage.    

One technical remark is in its place at this point. Since thresholding is done at the level of the individual parameter, what one really needs is an upper bound on the estimation error for each individual parameter. In other words, an upper bound on the sup-norm, $\max_{1\leq j\leq p}|\hat{\beta}_j-\beta_j^0|$, is sought. However, our results in the previous section provide rates of convergence in the much stronger $\ell_1$-norm\footnote{On finite-dimensional vector spaces these two norms are equivalent but here we are working in a setting where the dimension, $p$, tends to infinity.}. Of course one may simply use the $\ell_1$-rates of convergence to upper bound the sup-norm rates of convergence. But this is suboptimal. Alternatively, a strengthening of the adaptive restricted eigenvalue condition can yield rates of convergence in the $\ell_2$- or the sup-norm which are of a lower order of magnitude than the corresponding $\ell_1$-results. Upon request we can make results for the thresholded elastic net based on upper bounds on the $\ell_2$-norm rate of convergence available. We have omitted the results here since thresholding is a rather standard technique.

\section{Tuning Parameter Selection}\label{Tuning}
Recently, \cite{fant13} developed a method to select tuning parameters in high dimensional generalized linear models with more parameters than observations. Here we briefly describe their method with the terminology translated into our setting\footnote{\cite{fant13} consider log-likelihood functions of generalized linear models. However, as our loss functions can often be written as the negative of the log-likelihood (as seen in Section \ref{setup}), their setup applies to many of our examples.}. For a loss function of the form 
\begin{align}
\rho_\beta(Z_i) = -Y_i\beta'X_i+b (\beta'X_i)-c (Y_i, \xi)\label{GIC},
\end{align}
where $b(\cdot)$ and $c(\cdot, \cdot)$ are known functions and $\xi$ is a known scale parameter. The tuning parameter, $\lambda$, (as a generic tuning parameter) is chosen to minimize the Generalized Information Criterion (GIC) 
\begin{align*}
GIC (\lambda) = 2 \sbr[3]{\frac{1}{n} \sum_{i=1}^n \rho_{\hat{\beta}_\lambda} (Z_i)-\frac{1}{n} \sum_{i=1}^n \rho_{\text{saturated}} (Z_i)} + \frac{\log (\log (n))\log(p)}{n}s_{\lambda}. 
\end{align*}
where $\rho_{\text{saturated}}$ corresponds to the saturated model as defined in \cite{fant13}  and $\hat{\beta}_\lambda$ is the elastic net minimizer corresponding to the penalty parameter $\lambda$. $s_{\lambda}$ is the number of non-zero coefficients for a given value of $\lambda$.

Theorems 1-2 and Corollary 1 in \cite{fant13} establish the consistency of this approach, i.e. $P\del[1]{\supp(\hat{\beta}_\lambda)=\supp(\beta^0)}\to 1$ where $\supp(v)$ indicates the support of the vector $v$, i.e. the location of its non-zero entries. Hence, GIC will, asymptotically, select the correct model (provided there exists a $\lambda_0$ for which $\supp(\hat{\beta}_\lambda)=\supp(\beta^0)$. A necessary condition for this to be meaningful is of course that the target is linear, i.e. $f^0(x)={\beta^0}'x$ for some $\beta^0\in\mathbb{R}^p$, but nothing prevents one from using GIC even when the target is non-linear. The theoretical merits of the procedure are, to our knowledge, unknown in that case. 

At this point it is also worth mentioning that the quadratic loss is covered by (\ref{GIC}) since
\begin{align*}
\frac{1}{2}\del[1]{y-\beta'x}^2=\frac{1}{2}y^2+\frac{1}{2}(\beta'x)^2-y(\beta'x)
\end{align*}
such that we may choose $b(u)=\frac{1}{2}u^2$ and $c(y,\xi)=-\frac{1}{2}y^2$. Similarly, the logistic loss is covered since 
\begin{align*}
\rho_f(y,x)=-y(\beta'x)+\log(1+e^{\beta'x})
\end{align*} 
such that one may choose $b(u)=\log(1+e^u)$ and $c(\cdot, \cdot)=0$. This covers a large class of convex functions as in \cite{vdG08}. Regarding the choice of threshold for variable selection we suggest using the procedure outlined in \cite{canerk13}. 

\section{Econometric Examples}\label{Examples}
In this section we present sufficient conditions for Assumptions 1-3 to be valid for concrete econometric examples. Recall that these Assumptions are sufficient for Theorem \ref{thm1} to be valid. When necessary, we also comment on sufficient conditions for the assumptions underlying Theorem \ref{pbound}. As already argued in connection with this theorem the critical assumption is the Lipschitz continuity of the loss function while the boundedness assumption on the basis functions is rather innocuous. As a consequence, we will focus on verification of the Lipschitz continuity whenever this is not trivially satisfied. First, we present a general sufficient condition for loss functions to satisfy a quadratic margin condition. This condition is then exemplified on a couple of examples. In the following $\enVert[0]{\cdot}$ denotes the $L_2(P)$-norm on $\mathbf{F}$ and the data is supposed to come from an i.i.d. sample.

Assume that the loss function is of the form $\rho_f(x,y)=\rho(f(x),y)$ such that it only depends on $x$ through $f(x)$. By Doob's representation, see Lemma 1.13 in \cite{kallenberg02}, we define
\begin{align*}
l(f(X),X):=E[\rho(f(X),Y)|(X, f(X))]
\end{align*}
Furthermore, by iterated expectations, it suffices to show that $l(f(x),x)$ satisfies the margin condition in order to verify that this is the case for $\rho_f(x,y)=\rho(f(x),y)$. The target function will be 
\begin{align*}
f^0(x)=\argmin_{f\in\mathbf{F}} l(f(x),x)
\end{align*}
which is the minimizer of the loss function and hence a natural choice. Next, fix an $x\in \mathcal{X}$ and assume that $l(a,x)$ is twice continuously differentiable in its first argument in an $\enVert{\cdot}_\infty$-neighborhood of radius $\eta>0$ around $f^0(x)$ with second derivative bounded from below by $2c>0$, for $c>0$. Then it follows by Lagrange's form of the remainder term in a Taylor series that for some $\tilde{a}$ on the line segment joining $f(x)$ and $f^0(x)$ 
\begin{align*}
l(f(x),x)
&=
l(f^0(x),x)+l_1'(f^0(x),x)(f(x)-f^0(x))+\frac{l_{11}''(\tilde{a},x)}{2}(f(x)-f^0(x))^2\\
&\geq
l(f^0(x),x)+c(f(x)-f^0(x))^2
\end{align*}
for all $f\in \mathbf{F}$ such that $|f(x)-f^0(x)|<\eta$, and $l_1'(\cdot, \cdot),\ l_{11}''(\cdot, \cdot)$ represent the first and second order partial derivatives of $l$ with respect to its first argument. Assuming this is valid for all $x\in\mathcal{X}$ implies that %
\begin{align}
l(f(x),x)-l(f^0(x),x)
\geq
c(f(x)-f^0(x))^2\label{trickaux}
\end{align}
for all $f\in \mathbf{F}_{\text{local}}=\cbr[0]{f\in\mathbf{F}:\enVert[0]{f-f^0}_{\infty}\leq\eta}$. This yields, using that the $(X_i,Y_i)_{i=1}^n$ are identically distributed,
\begin{align*}
\Xi(f)
=
P\rho_f-P\rho_{f^0}
=
\sbr[1]{E\rho_f(X_1,Y_1)-E\rho_{f^0}(X_1,Y_1)}
=
E\sbr[1]{l(f(X_1,X_1)-l(f^0(X_1),X_1)}
\geq
c\enVert[0]{f-f^0}^2
\end{align*}
for all $f\in\mathbf{F}_{\text{local}}$ such that the margin condition is satisfied with $G(x)=cx^2$. Put differently, the above shows that it suffices to establish a lower bound on the second derivative of the conditional expectation of the loss function in order to show that the margin condition holds with quadratic margin. 

We shall next use the above result to verify that some typical loss functions encountered in econometrics satisfy Assumptions 1-3. 

\subsection{Quadratic loss}\label{subsecql}
Assume that the data is generated by the i.i.d. sequence
\begin{align}
Y_i=f^0(X_i)+\epsilon_i\label{npreg}
\end{align}
for $X_i\in\mathcal{X}\subseteq \mathbb{R}$.\footnote{It is not difficult to generalize this to $\mathcal{X}$ being some subset of a normed space by modifying the definition of sub-gaussianity in footnote \ref{sg} below slightly.} We show that the quadratic loss function 
\begin{align}
\rho(f(x),y)=\del[0]{y-f(x)}^2\label{qloss}
\end{align}
for $f\in\mathbf{F}$ can be encompassed by our general theory. The quadratic loss function is probably the most widely used loss in regression analysis. The main obstacle in fitting this type of loss into our general theory is that Theorems \ref{pbound} and \ref{thm3} rely on the loss function being Lipschitz continuous in order to lower bound the probability of the event $\tau$. However, the quadratic loss is only locally Lipschitz continuous. As we shall see, this can still deliver an oracle inequality which holds with high probability if the covariates and the error terms don't have too heavy tails and the target function $f^0$ is bounded on compact subsets of $\mathbb{R}$. Before stating the first result note that  for any $f\in \mathbf{F}$, if $\epsilon_1$ is independent of $X_1$
\begin{align}
\Xi(f)=E(Y_1-f(X_1))^2-E(Y_1-f^0(X_1))^2=E(f(X_1)-f^0(X_1))^2\label{qm}
\end{align}
such that the excess loss reduces to the mean square error (MSE) in case of a quadratic loss function. This turns out to be particularly useful when discussing the relation of our procedure to nonparametric series estimation in Section \ref{npsec} below. (\ref{qm}) also reveals that the margin condition is satisfied with a quadratic margin, even without using the technique from above, which implies that Assumption 1 is satisfied. 

\textbf{Remark:} If the sample is merely assumed to be independently but not necessarily identically distributed the above calculations still go through with some small modifications. We explain how next. First, let the norm on $\mathbf{F}$ be $\enVert[0]{f}=\sqrt{\sum_{i=1}^n\frac{1}{n}Ef^2(X_i)}$. Note that this reduces to the norm from the identically distributed case in case the variables are actually identically distributed. Assuming that $\epsilon_i$ is independent of $X_i$ for all $i=1,...,n$ one gets by the same arguments as above
\begin{align*}
\Xi(f)=\frac{1}{n}\sum_{i=1}^nE(Y_i-f(X_i))^2-\frac{1}{n}\sum_{i=1}^nE(Y_i-f^0(X_i))^2=\frac{1}{n}\sum_{i=1}^nE(f(X_i)-f^0(X_i))^2
\end{align*}
which shows that the margin condition is satisfied with a quadratic margin function even in the case where the sample is not identically distributed. In particular, this means that the bounds to follow, including the nonparametric ones in Section \ref{npsec}, are valid in the presence of heteroscedasticity. For the sake of exposition we shall now return to the i.i.d. situation. 

\begin{lemma}\label{quad}
Assume that $\max_{1\leq j\leq p}\enVert[0]{\psi_j}_\infty \leq K$. Set $\lambda_2=\frac{\lambda_1\sqrt{s^*}}{2 \enVert{\beta^*}_2}$ and take for some constant $L\geq 8$, $8 \lambda_0 \le \lambda_1 \le L \lambda_0$ and suppose that
\begin{align}
\frac{36 L^2 \lambda_0 s^*K}{\eta \phi_*^2} \leq 1\label{quadinrec}.
\end{align}
In addition, assume that $\|f^* - f^0 \|_{\infty} \leq \eta/2$, with $\frac{3}{2}E(f^*-f^0)^2 \leq \frac{9 \lambda_1^2s^*}{\phi_*^2}$ and finally that the population covariance matrix of the covariates $\Sigma=E(\psi(X)\psi'(X))$ has full rank. Then,

a) Assumptions 1-3 are valid and on the set $\tau$
\begin{align}
E(\hat{f}-f^0)^2 + \lambda_1 \| \hat{\beta} - \beta^* \|_1 + \lambda_2 \|\hat{\beta}_{S^*} - \beta_{S^*}\|_2^2
\leq
6 E(f^*-f^0)^2+  \frac{36 L^2\lambda_0^2 s^*}{\phi_*^2}\label{quada}.
\end{align}

b) if, furthermore $\frac{1}{n} \sum_{i=1}^n \max_{ 1 \le j \le p} E \psi_j^2 (X_i) \leq 1$, $\sup_{|x|\leq C_n}|f^0(x)|\leq F_{C_n}<\infty$ for all $C_n>0$, $\Phi=\cbr[0]{\beta\in\mathbb{R}^p:\enVert[0]{\beta}_1\leq G<\infty}$ and $X_1,\epsilon_1$ are sub-gaussian\footnote{A real random variable $V$ is said to be sub-gaussian if $P(|V|\geq x)\leq \alpha\exp(-\delta x^2)$ for some positive constants $\alpha$ and $\delta$.\label{sg}} one has for all $C_n>0$ that (\ref{quada}) is valid with probability at least $1-\del[2]{\frac{1}{p}}-2\alpha n\exp(-\delta C_n^2)$ for $\lambda_0=dD_n\sqrt{\frac{\log(p)}{n}}$ with $D_n=2(C_n+2F_{C_n}+GK)$ and $d>0$.
\end{lemma}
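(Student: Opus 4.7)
The margin condition (Assumption~1) with quadratic $G(u)=u^2$ and its conjugate $H(v)=v^2/4$ is immediate from the identity $\Xi(f)=\|f-f^0\|^2$ displayed in (\ref{qm}). Assumption~3 follows from the elementary bound $\phi^2(S^*)\ge \lambda_{\min}(\Sigma)>0$. The only non-trivial task is Assumption~2: $f^*\in\mathbf{F}_{\text{local}}$ is direct from the hypothesis $\|f^*-f^0\|_\infty\le \eta/2$, while for the second half H\"older's inequality together with $\max_j\|\psi_j\|_\infty\le K$ gives $\|f_\beta-f_{\beta^*}\|_\infty\le K\|\beta-\beta^*\|_1\le KM^*$, so by the triangle inequality it suffices to show $KM^*\le \eta/2$. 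Plugging $\lambda_2=\lambda_1\sqrt{s^*}/(2\|\beta^*\|_2)$ into the argument of $H$ in the definition of $\Delta^*$ collapses it to $6\lambda_1\sqrt{s^*}/\phi_*$, so $H(\cdot)=9\lambda_1^2 s^*/\phi_*^2$; combined with the hypothesis $\tfrac{3}{2}E(f^*-f^0)^2\le 9\lambda_1^2 s^*/\phi_*^2$ this yields $2\Delta^*\le 36\lambda_1^2 s^*/\phi_*^2$, whence $KM^* = K\Delta^*/\lambda_0\le 18L^2\lambda_0 s^*K/\phi_*^2\le \eta/2$ by (\ref{quadinrec}). Feeding the same value of $H$ into Theorem~\ref{thm1} gives $4\Delta^*=6E(f^*-f^0)^2+36\lambda_1^2 s^*/\phi_*^2\le 6E(f^*-f^0)^2+36L^2\lambda_0^2 s^*/\phi_*^2$, which is precisely (\ref{quada}).

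\textbf{Plan for Part (b).} The obstacle is that the quadratic loss is only locally Lipschitz in $f(x)$, so Theorem~\ref{pbound} cannot be invoked off the shelf. My strategy is truncation: define the event $A_n=\{\max_{i\le n}|X_i|\le C_n\}\cap\{\max_{i\le n}|\epsilon_i|\le C_n\}$. On $A_n$ one has $|Y_i|\le |f^0(X_i)|+|\epsilon_i|\le F_{C_n}+C_n$ and, uniformly over $\beta\in\Phi$, $|\beta'\psi(X_i)|\le \|\beta\|_1\max_j\|\psi_j\|_\infty\le GK$. The elementary identity $(y-a)^2-(y-b)^2=(2y-a-b)(b-a)$ then delivers the Lipschitz bound $|\gamma(y,a)-\gamma(y,b)|\le (2|y|+|a|+|b|)|a-b|\le D_n|a-b|$ with $D_n=2(C_n+2F_{C_n}+GK)$ on $A_n$. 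Applying the concentration step underlying Theorem~\ref{pbound} conditionally on $A_n$ (or, equivalently, to a globally $D_n$-Lipschitz extension of $\rho_\beta$ that coincides with it inside the truncation box) yields $P(\tau^c\cap A_n)\le 1/p$, while sub-gaussianity of $X_1$ and $\epsilon_1$ together with a union bound gives $P(A_n^c)\le 2\alpha n\exp(-\delta C_n^2)$. Combining these two estimates with Part (a) delivers (\ref{quada}) with the claimed probability.

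\textbf{Main obstacle.} The principal hurdle is the truncation step in Part~(b): the contraction/Talagrand bound behind Theorem~\ref{pbound} is usually formulated for a globally Lipschitz loss, so one must argue carefully that it is legitimate to work with the sample-dependent Lipschitz constant $D_n$. The cleanest remedy, which I would implement, is to replace $\rho_\beta$ outside the bounded box $\{|x|\le C_n,\ |y|\le F_{C_n}+C_n\}$ by a globally $D_n$-Lipschitz extension and apply Theorem~\ref{pbound} to the extension; the empirical processes of the original and extended losses agree on $A_n$, so the probability bound transfers. Part~(a) is essentially bookkeeping once one notices that the hypothesis $\tfrac{3}{2}E(f^*-f^0)^2\le 9\lambda_1^2 s^*/\phi_*^2$ is calibrated precisely to make $\Delta^*$ small enough that the $\ell_1$-ball of radius $M^*$ around $\beta^*$ stays inside $\mathbf{F}_{\text{local}}$.
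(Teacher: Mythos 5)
Your proposal follows essentially the same route as the paper's proof: part (a) verifies Assumptions 1--3 via the identical computation $M^*\le 18L^2\lambda_0 s^*/\phi_*^2$ combined with condition (\ref{quadinrec}), and part (b) uses the same truncation event, the same local Lipschitz constant $D_n$, and the same sub-gaussian union bound for the complement. Your remark about replacing $\rho_\beta$ by a globally $D_n$-Lipschitz extension that agrees with it on the truncation box is in fact slightly more careful than the paper, which applies Theorem \ref{pbound} on the truncated event without further comment.
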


Lemma \ref{quad} a) provides sufficient conditions for Assumptions 1-3, and a bound similar to the one in Theorem \ref{thm1} which is valid on the set $\tau$. Lemma \ref{quad} b) also provides a lower bound on the probability of the set $\tau$ in the case of a non-Lipschitz continuous loss function. Some of the assumptions of Lemma \ref{quad} may still seem rather high level but they are not very restrictive. We next give a concrete example of when they are satisfied. In particular, we shall assume that $x$ has support in $[-1,1]$. This can of course be achieved by a suitable (sigmoidal) transformation of the covariates. $f^0$ will be assumed to belong to a  H\"{o}lder class of order $1/2<r<\infty$ (or it is $r$-smooth in the terminology of \cite{chen07} \footnote{Following \cite{chen07}, we say that a function $f$ is $r$-smooth if it has derivatives up to order $\lfloor r\rfloor$ and the $r$-th derivative is H\"{o}lder continuous with an exponent of $r-\lfloor r\rfloor$. Here $\lfloor r\rfloor$ denotes the greatest integer strictly less than $r$. This is also often called a H\"{o}lder class of order $r$, see eg \cite{Tsybakov09}.\label{SH}}). In this case one may choose $\mathcal{F}_L$ to consist of $p$th degree polynomials, i.e. $\psi_j(x)=x^j,\ j=1,...,p$, and $\Gamma=\cbr[0]{1,...,s^*}$. Using other basis functions than polynomials is of course a possibility if one does not want to assume that the covariates are contained in $[-1,1]$. However, this is a rather innocent assumption since it can be obtained by transforming the covariates. Alternatively, one may use a basis of bounded functions. In any case, it is not our purpose to promote one basis over another. We simply show that polynomials suffice for our theory to work.

The assumption $\max_{1\leq j\leq p}\enVert[0]{\psi_j}_\infty \leq K$ requires the basis functions $\psi_j$ to be bounded. Since the covariates have support in $[-1,1]$ it follows that $\enVert[0]{\psi_j}_\infty=\enVert[0]{x^j}_\infty=1$ for all $j=1,...,p$ and hence the assumption is satisfied with $K=1$. As mentioned right after (\ref{4.3}) this also implies $\frac{1}{n} \sum_{i=1}^n \max_{ 1 \le j \le p} E \psi_j^2 (X_i) \leq 1$ which is needed in part b) of Lemma \ref{quad}.
 
The approximation requirements $\|f^* - f^0 \|_{\infty} \le \eta/2$ and $\frac{3}{2}E(f^*-f^0)^2 \leq \frac{9 \lambda_1^2s^*}{\phi_*^2}$ state that the target should be approximated well by the oracle and imply that $f^*\in \mathbf{F}_{\text{local}}$. As explained previously, one may choose $f^*=f^0$ when $f^0$ is linear. Hence, the two approximation requirements are trivially satisfied in the linear case. In general the validity of these high level assumptions depend on the choice of $\mathbf{F}$ and the existence of properly approximating basis functions $\cbr[0]{\psi_j}_{j=1}^\infty$. As mentioned, we shall assume that the covariates are compactly supported. In general, assuming that $f^0$ belongs to a H\"{o}lder class of order $1/2<r<\infty$, choosing $\mathcal{F}_L$ to consist of $p$th degree polynomials, i.e. $\psi_j(x)=x^j,\ j=1,...,p$, and $\Gamma=\cbr[0]{1,...,s^*}$ it follows from page 5573 in \cite{chen07} that $\|f^* - f^0 \|_{\infty}\in O({s^*}^{-r})$. Hence, $\|f^* - f^0 \|_{\infty}\leq \eta/2$ will be satisfied for any $\eta>0$ as long as $s$ is sufficiently large. Furthermore, since
\begin{align}
E(f^*-f^0)^2
=
\enVert[0]{f^*-f^0}^2
\leq
\enVert[0]{f^*-f^0}_{\infty}^2\in O({s^*}^{-2r})\label{scond}
\end{align} 
one also has that $\frac{3}{2}E(f^*-f^0)^2\in O({s^*}^{-2r})$ such that  $\frac{3}{2}E(f^*-f^0)^2\leq \frac{9 \lambda_1^2s^*}{\phi_*^2}$ if $s^*$ is at least of the order $\lambda_1^{-2/(2r+1)}$, i.e. $s^*\in \Omega\del[1]{\lambda_1^{-2/(2r+1)}}, $ and $\phi_*$ is assumed to be bounded away from 0.

The condition (\ref{quadinrec}) is not restrictive either. Assuming that $\phi_*$ is bounded away from 0 it suffices to show that $\lambda_0s^*\to 0$. Choosing $s^*\in\Theta\del[1]{\lambda_1^{-2/(2r+1)}}$, which is in accordance with $s^*\in \Omega\del[1]{\lambda_1^{-2/(2r+1)}}=\Omega\del[1]{\lambda_0^{-2/(2r+1)}}$ (since $\lambda_1$ and $\lambda_0$ are of the same order), yields
\begin{align}
\lambda_0s^*\in \Theta\del[2]{\lambda_0\lambda_0^{-2/(2r+1)}}=\Theta\del[2]{\lambda_0^{(2r-1)/2r}}\label{thetabound}.
\end{align} 
Hence, if $r>1/2$, it is enough that $\lambda_0\to 0$. Since the covariates are assumed to have compact support and $f^0$ is more than $\frac{1}{2}$-smooth, and hence continuous, one has $F_{C_n}$ is bounded and so $\lambda_0\in \Theta\del[1]{D_n\sqrt{\frac{\log(p)}{n}}}=\Theta\del[1]{C_n\sqrt{\frac{\log(p)}{n}}}$ (as seen next we think of $C_n$ as an increasing sequence). For asymptotic considerations an obvious choice of $C_n$ is $C_n=\sqrt{\frac{2}{\delta}\log(n)}$ since this ensures that $2\alpha n\exp(-\delta C_n^2)$ tends to zero in the lower bound on the probability with which (\ref{quada}) holds in part b) of Lemma \ref{quad}. With this choice, $\lambda_0\in \Theta\del[1]{\sqrt{\log(n)}\sqrt{\frac{\log(p)}{n}}}$ and it suffices that $\sqrt{\log(n)}\sqrt{\frac{\log(p)}{n}}\to 0$ which still permits $p$ to increase at an almost exponential rate in $n$. Note also at this point that since $s^*\in\Theta\del[1]{\lambda_1^{-2/(2r+1)}}$, $\lambda_0\in \Theta\del[1]{\sqrt{\log(n)}\sqrt{\frac{\log(p)}{n}}}$ implies $s^*\in \Theta\del[1]{\sbr[0]{\frac{n}{\log(p)\log(n)}}^{1/(2r+1)}}$. So the more smooth ($r$ large) the function $f^0$ is the smaller can we afford to choose $s^*$ since smooth functions are easier to approximate. This is of course desirable since this means few coefficients have to be estimated which results in a smaller estimation error.    

Assuming $\Sigma$ to have full rank is reasonably innocent as discussed in connection with the adaptive restricted eigenvalue condition in Section \ref{setup}.

The additional assumptions of part b) of Lemma \ref{quad} are used to establish a lower bound on the probability of $\tau$ on which (\ref{quada}) is valid in the absence of Lipschitz continuity of the loss function. First, $\frac{1}{n} \sum_{i=1}^n \max_{ 1 \le j \le p} E \psi_j^2 (X_i) \leq 1$, imposes a further boundedness assumption on the basis functions, which, as discussed above, is trivially satisfied since $K=1$. The assumption $\sup_{|x|\leq C_n}|f^0(x)|\leq F_{C_n}<\infty$ requires $f^0$ to be locally bounded and is satisfied in particular if $f^0$ is continuous. Continuity of $f^0$ is of course ensured if the target is linear or $r$-smooth for $r>0$ (the latter being covered by our working example). Hence, our theory covers the case of a linear target with quadratic loss. If, as in the discussion above, we continue to assume that the covariates are compactly supported $F_{C_n}$ can even be chosen to be an absolute constant independent of the sample size (as already mentioned just after (\ref{thetabound})). The assumption $\enVert{\beta}_1\leq G$ is rather innocent since $G$ can be chosen arbitrarily large\footnote{It is also straightforward to generalize to the setting where $G$ is a sequence depending on the sample size.}. The sub-gaussianity assumption on the covariates and the error terms prevents them from having too heavy tails. This is quite common assumption in the literature on high-dimensional models. When the covariates are bounded they are, a fortiori, subgaussian. In conclusion, all conditions of Lemma \ref{quad} are valid when $f^0$ belongs to a H\"{o}lder class of order at least $1/2$ and the covariates have support $[-1,1]$. In particular, choosing $\mathcal{F}_L$ to consist of $p$th degree polynomials, i.e. $\psi_j(x)=x^j,\ j=1,...,p$, and $\Gamma=\cbr[0]{1,...,s^*}$ is sufficient.

\textbf{Remark:} Even though the sub-gaussianity assumptions on the covariates and the error terms is a standard one we would like to stress that our theory is also applicable for much heavier tails. A version of Lemma \ref{quad} b) can be developed even when we only have $E|X_1|^r,\ E|\epsilon_1|^r\leq\kappa<\infty$ for some $r\geq 2$. As long as the covariates and error terms posses enough moments Lemma \ref{quad} b) still applies. More precisely, a slight change of the last part of the proof of part b) yields that $1-\del[2]{\frac{1}{p}}-\frac{\kappa n}{C_n^r}$. So the price to pay for the increased generality is that the last term in the lower bound on the probability no longer tends to 0 exponentially fast in $C_n$ such that $C_n$ now has to be at least of the order $O(n^{1/r})$.
% and potentially even diverge to infinity as the sample increases in an asymptotic analysis. 
% and that a specific choice of $\lambda_2$ has been made in the lemma. Furthermore, the lower bound on the probability with which the inequality holds is lower in Lemma 3 than in Theorem \ref{thm3}. This is due to the fact that the quadratic loss is only locally Lipschitz continuous and in order to rely on Theorem \ref{pbound} we must restrict our attention to a set on which the loss function is Lipschitz continuous.  

\subsubsection{Relation to non-parametric series estimators}\label{npsec}
At this point it is worth pointing out that our theory encompasses non-parametric series estimation as a special case since $\hat{f}(x)=\sum_{j=1}^{p}\hat{\beta}_j\psi_j(x)$ can be seen as a series estimator of the unknown function $f^0(x)$. Note first that our results are mainly non-asymptotic  while we are not aware of any finite sample results for classical series estimators. Apart from this, our estimator has another big advantage compared to the classical series estimator of e.g. \cite{newey97}: In conventional series estimators of the form $\sum_{j=1}^{T}\hat{\beta}_j\psi_j(x)$ the number of terms $T$ in the series can usually only increase slower than the sample size such as $T\in o(n^{1/2})$ or $T\in o(n^{1/3})$ in \cite{newey97}. However, as we have already hinted at several times, the number of series terms $p$ in our estimator can increase exponentially fast in asymptotic considerations. We shall be more precise about this in the discussion of Corollary \ref{quadcor} below. What we require instead is that the number of \textit{non-zero} terms, $s$, in our series estimator increases slowly. As remarked already, though in a slightly different context, by \cite{bellonic11} this is still a considerable generalization since we do not require that it is the \textit{first} $s$ terms in $f^*(x)=\sum_{j=1}^{p}\beta^*_j\psi_j(x)$ that have the non-zero coefficients. Put differently, we do not assume any prior knowledge of the location of the non-zero coefficients of the oracle estimator. To be concrete, assume that $\mathbf{F}=L_2(P_{X_1})$ with the usual inner product $\langle\cdot,\cdot\rangle$ and $P_{X_1}$ being the distribution of $X_1$. This is a Hilbert space and hence there exists an orthonormal basis $\cbr[0]{g_j}_{j=1}^\infty$ such that every $f\in\mathbf{F}$ can be written as
\begin{align*}
f(x)=\sum_{j=1}^\infty\langle f, g_j\rangle g_j(x)
\end{align*} 
Using the first $s$ terms of this series representation as an approximation of $f$ yields the approximation $\tilde{f}(x)=\sum_{j=1}^s\langle f, g_j\rangle g_j(x)$ with corresponding approximation error $\sum_{j=s+1}^\infty\langle f, g_j\rangle^2$. More generally, letting $\mathcal{I}_s$ be any subset of cardinality $s$ of the natural numbers, one could instead consider the approximation $\breve{f}(x)=\sum_{j\in\mathcal{I}_s}\langle f, g_j\rangle g_j(x)$ with approximation error $\sum_{j\in\mathcal{I}_s^c}\langle f, g_j\rangle^2$. Clearly, choosing $\mathcal{I}_s$ to consist of the $s$ largest $\langle f, g_j\rangle^2$ yields the smallest approximation error. However, there is no reason to believe that the $s$ largest terms in $\cbr[0]{\langle f, g_j\rangle^2}_{j=1}^\infty$ are also the $s$ first ones, in fact their location is generally unknown and hence our method is useful since it allows for $p>>s$ candidate terms. Put differently, we choose $\mathcal{I}_s$ from $\cbr[1]{A\subset\cbr[0]{1,...,p}: |A|=s}$ and clearly $\cbr[0]{1,...,s}\in \cbr[1]{A\subset\cbr[0]{1,...,p}: |A|=s}$. This is considerably more general than simply choosing $\mathcal{I}_s=\cbr[0]{1,...,s}$ since $s<<p$.

Note also that \cite{canerz13} have shown that the choice of dimension matters in structural function estimation via sieves. As we shall see now, as long as $f^0$ can be "well approximated" by some sparse linear combination $f^*(x)=\sum_{j=1}^{p}\beta^*_j\psi_j(x)$, part b) of Lemma \ref{quad} will provide a \textit{finite} sample upper bound on the estimation error of $\hat{f}$ (in an $L_2$-sense). In particular, part b) of Lemma \ref{quad} yields
\begin{corollary}\label{nonpar}
a) Let the assumptions of Lemma \ref{quad} be satisfied. Then
\begin{align}
E(\hat{f}-f^0)^2
\leq
6E(f^*-f^0)^2 +  \frac{36 L^2\lambda_0^2 s^*}{\phi_*^2}\label{seriesest}.
\end{align}
with probability at least $1-\del[2]{\frac{1}{p}}-2\alpha n\exp(-\delta C_n^2)$. 

b)
Assume furthermore that the covariates have support $[-1,1]$, $f^0$ is $r$-smooth for $1/2<r<\infty$ (belongs to a H\"{o}lder class of order $r$ as defined in footnote \ref{SH}) and we choose $\mathcal{F}_L$ to consist of polynomials of degree $s^*$. Then if $\Phi$ is compact, $\epsilon_1$ sub-gaussian and $\phi_*$ bounded away from 0.
\begin{align}
E(\hat{f}-f^0)^2
\leq
O\del[1]{{s^*}^{-2r}} +  \frac{36 L^2\lambda_0^2 s^*}{\phi_*^2}\label{seriesest2}.
\end{align}
with probability at least $1-\del[2]{\frac{1}{p}}-2\alpha n\exp(-\delta C_n^2)$.

c) Maintain the assumptions of b). Choosing $s^*\in\Theta\del[2]{\lambda_0^{\frac{-2}{2r+1}}}$ yields
\begin{align}
E(\hat{f}-f^0)^2
\leq O\del[2]{\lambda_0^{\frac{4r}{2r+1}}}=O\del[3]{C_n^{\frac{4r}{2r+1}}\sbr[2]{\frac{\log(p)}{n}}^{2r/(2r+1)}}\label{seriesest3}
\end{align}
with probability at least $1-\del[2]{\frac{1}{p}}-2\alpha n\exp(-\delta C_n^2)$.

d) Finally, if in addition one chooses $C_n=\sqrt{\frac{2}{\delta}\log(n)}$ \footnote{Where $\delta>0$ is the subgaussianity constant of footnote \ref{sg}.}, one has
\begin{align}
E(\hat{f}-f^0)^2\in O\del[3]{\log(n)^{\frac{2r}{2r+1}}\sbr[2]{\frac{\log(p)}{n}}^{2r/(2r+1)}}\label{seriesest4}
\end{align}
with probability at least $1-\frac{1}{p}-\frac{2\alpha}{n}$.
\end{corollary}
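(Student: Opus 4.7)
The plan is to view the four parts as a single cascade: part (a) is just Lemma \ref{quad} b) read with the identification $\Xi(f)=E(f(X_1)-f^0(X_1))^2$, and parts (b)--(d) successively specialize the bound by plugging in concrete approximation rates and concrete choices of the tuning sequences.

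For part (a), I would start from inequality (\ref{quada}) of Lemma \ref{quad} b), which is already shown to hold on the event $\tau$ with probability at least $1-(1/p)-2\alpha n\exp(-\delta C_n^2)$. For quadratic loss with $\epsilon_1$ independent of $X_1$, identity (\ref{qm}) gives $\Xi(\hat f)=E(\hat f-f^0)^2$. Dropping the two non-negative penalty terms $\lambda_1\|\hat\beta-\beta^*\|_1$ and $\lambda_2\|\hat\beta_{S^*}-\beta^*_{S^*}\|_2^2$ from the left-hand side of (\ref{quada}) then delivers (\ref{seriesest}) verbatim, with the same lower bound on the probability.

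For part (b), I would invoke the approximation-theoretic fact quoted from \cite{chen07} on page 5573: for $f^0$ in a H\"older class of order $r>1/2$ on $[-1,1]$, choosing $\mathcal{F}_L$ to be polynomials of degree at most $s^*$ yields an oracle $f^*$ with $\|f^*-f^0\|_\infty\in O(s^{*-r})$. Combined with the elementary bound $E(f^*-f^0)^2\leq \|f^*-f^0\|_\infty^2$ from (\ref{scond}), this gives $E(f^*-f^0)^2\in O(s^{*-2r})$. Inserting into (\ref{seriesest}) produces (\ref{seriesest2}). Under the assumptions of (b), the hypotheses of Lemma \ref{quad} b) are indeed met: sub-gaussianity of $\epsilon_1$ together with compact support of $X_1$ (hence sub-gaussianity of the covariate as well) holds, $\Phi$ compact supplies the constant $G$, and $\phi_*$ is bounded away from 0 by assumption.

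For part (c), the balancing calculation $s^*\in\Theta(\lambda_0^{-2/(2r+1)})$ makes the approximation term $s^{*-2r}$ and the estimation term $\lambda_0^2 s^*$ both of order $\lambda_0^{4r/(2r+1)}$. Because $f^0$ is continuous on $[-1,1]$, $F_{C_n}$ is uniformly bounded, so $D_n=2(C_n+2F_{C_n}+GK)=\Theta(C_n)$ whenever $C_n\to\infty$, and hence $\lambda_0=\Theta\!\bigl(C_n\sqrt{\log(p)/n}\bigr)$; substituting this into $\lambda_0^{4r/(2r+1)}$ yields (\ref{seriesest3}). For part (d), the choice $C_n=\sqrt{(2/\delta)\log(n)}$ gives $C_n^{4r/(2r+1)}=\Theta(\log(n)^{2r/(2r+1)})$, which plugged into (\ref{seriesest3}) is exactly (\ref{seriesest4}); the same choice makes $2\alpha n\exp(-\delta C_n^2)=2\alpha/n$, so the probability lower bound becomes $1-1/p-2\alpha/n$ as claimed. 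There is no genuine obstacle here beyond bookkeeping: the only point requiring a little care is checking that the assumptions of Lemma \ref{quad} b), in particular the approximation inequality $\tfrac{3}{2}E(f^*-f^0)^2\leq 9\lambda_1^2 s^*/\phi_*^2$ and condition (\ref{quadinrec}), remain compatible with the chosen scaling $s^*\in\Theta(\lambda_0^{-2/(2r+1)})$, which follows from $r>1/2$ via the $\Theta(\lambda_0^{(2r-1)/2r})$ bound in (\ref{thetabound}) already exhibited in the preceding discussion.
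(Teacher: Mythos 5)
Your proposal is correct and follows essentially the same route as the paper: part (a) is read off from Lemma \ref{quad} b) via the identity $\Xi(\hat f)=E(\hat f-f^0)^2$ after dropping the penalty terms, part (b) uses the polynomial approximation bound (\ref{scond}), part (c) balances the two terms via $s^*\in\Theta(\lambda_0^{-2/(2r+1)})$ together with the boundedness of $F_{C_n}$, and part (d) substitutes $C_n=\sqrt{(2/\delta)\log(n)}$. Your added check that this scaling of $s^*$ remains compatible with the hypotheses of Lemma \ref{quad} is a welcome piece of explicitness that the paper delegates to the discussion around (\ref{thetabound}).
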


(\ref{seriesest}) of Corollary \ref{nonpar} is remarkable since it provides a finite sample upper bound on the mean square error (MSE) of the Elastic Net series estimator $\hat{f}$ of $f^0$. The upper bound on the MSE depends on two terms. First, $\Xi (f^*)$, the approximation error stemming from approximating $f^0$ by $f^*$. The second term, $\frac{36 L^2\lambda_0^2 s^*}{\phi_*^2}$, can be interpreted as the estimation error of $\hat{\beta}$ in estimating the oracle parameter $\beta^*$. The larger the number of coefficients to be estimated, the larger will the estimation error be, but the approximation error will be smaller. In (\ref{seriesest2}) we restrict attention to bounded covariates (which are, a fortiori, sub-gaussian) and smooth functions. This allows us to give an explicit order of magnitude on $E(f^*-f^0)^2$ as explained in the discussion after Lemma \ref{quad}. Here it is worth mentioning that the order of approximation error is based on results of approximation by polynomials of degree $s^*$. However, our discussion prior to Corollary \ref{nonpar} indicates that it is not necessarily a linear combination of $x,x^2,...,x^{s^*}$ which yields the best approximation and so it might be possible to further improve the bound in (\ref{seriesest2}). But this is a topic in approximation theory which we do not feel is appropriate to pursue here. However, our estimator may perform perform much better in practice than the plain series estimator. 

(\ref{seriesest3}) follows from (\ref{seriesest2}) by choosing $s^*$ to minimize the order of the upper bound in (\ref{seriesest2}). We remark here that this is exactly in accordance with the choice of $s^*$ in connection with (\ref{scond}) and (\ref{thetabound}) which ensured the validity of the approximability conditions in Lemma \ref{quad} since $\lambda_1$ and $\lambda_0$ are of the same order. Note that (\ref{seriesest4}) gives an explicit order of the estimation error in terms of $p$ and $n$ only. It reveals that the $L_2$-distance between $\hat{f}$ and $f^0$ tends to zero even when $p$ is allowed to increase almost exponentially fast in $n$. We also remark that by utilizing the boundedness of the covariates in the proof of part b) of Lemma \ref{quad} the lower bound on the probability with which (\ref{seriesest2}) and (\ref{seriesest3}) are valid may be increased to $1-\del[2]{\frac{1}{p}}-\alpha n\exp(-\delta C_n^2)$. This can further be increased to $1-\frac{1}{p}$ if the error terms are bounded as well. The corresponding lower bounds on the probability with which (\ref{seriesest4}) is valid may similarly be increased to $1-\frac{1}{p}-\frac{\alpha}{n}$ and $1-\frac{1}{p}$, respectively. %Finally, recall that we have already argued after Lemma \ref{quad} that when $f^0$ is $r$-smooth with $r>1/2$, the covariates have support $[-1,1]$ and polynomials are used to approximate $f^0$, then all high level assumptions of Lemma \ref{quad} are satisfied. Hence, (\ref{seriesest2})-(\ref{seriesest4}) are valid when the covariates have support $[-1,1]$, $f^0$ is $r$-smooth and polynomials are used as approximators.   

\subsubsection{Asymptotic results for quadratic loss}
We now return to the general setting of Lemma \ref{quad}. To illustrate the usefulness of Lemma \ref{quad} we remark that it can be used to establish the following asymptotic result. As in Corollary \ref{corlinasym} we assume that $p\in O(\exp(n^a))$ and $s^*\in O(n^b)$ for some $a>0$ and $b\geq 0$. Furthermore, we assume that $F_{C_n}\in O(n^{\tilde{d}})$ for some $\tilde{d}\geq 0$.

\begin{corollary}\label{quadcor}
Let the assumptions of Lemma \ref{quad} be satisfied. Choose $C_n=\sqrt{\frac{2}{\delta}\log(n)}$ \footnote{Where $\delta>0$ is the subgaussianity constant of footnote \ref{sg}.}. Then, if $\phi_*^2$ is bounded away from 0, one has with probability tending to one,

a) 
\begin{align*}
\limsup_{n\to\infty} E(\hat{f}-f^0)^2
\leq
6\limsup_{n\to\infty} E(f^*-f^0)^2
\end{align*}
if $a+b+2\tilde{d}<1$.

b) 
\begin{align*}
|\hat{\beta}-\beta^0|=\| \hat{\beta} - \beta^0 \|_1\to 0
\end{align*}
if the target $f^0$ is linear and $a+2\tilde{d}<1$.
\end{corollary}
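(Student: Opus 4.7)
The plan is to apply the finite-sample bound in part (a) of Lemma \ref{quad} and show that, under the choice $C_n=\sqrt{(2/\delta)\log(n)}$, (i) the estimation-error term on the right-hand side vanishes asymptotically under the stated rate conditions, and (ii) the event on which the bound is valid has probability approaching one. The second point is immediate: with this choice one has $\delta C_n^2=2\log(n)$, whence $2\alpha n\exp(-\delta C_n^2)=2\alpha/n$, and combined with $1/p\to 0$ the lower bound $1-1/p-2\alpha n\exp(-\delta C_n^2)$ from Lemma \ref{quad}(b) tends to one.

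For part (a), I first substitute the prescribed $C_n$ into $D_n=2(C_n+2F_{C_n}+GK)$ to obtain $D_n^2\in O(\log n+n^{2\tilde d})$, whence
\[\lambda_0^2 s^*\;=\;d^2 D_n^2\,\frac{\log p}{n}\,s^*\;\in\; O\del[1]{n^{a+b+2\tilde d-1}\,\text{polylog}(n)}.\]
Under $a+b+2\tilde d<1$ this vanishes, and because $\phi_*^2$ is bounded away from zero the estimation-error term $36L^2\lambda_0^2 s^*/\phi_*^2$ on the right of (\ref{quada}) vanishes as well. Taking $\limsup_{n\to\infty}$ in (\ref{quada}) then gives the desired $\limsup E(\hat f-f^0)^2\leq 6\limsup E(f^*-f^0)^2$.

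For part (b), I would use the flexibility in the choice of $\Gamma$ in the definition of the oracle, as in Remark 2 after Theorem \ref{thm1}: since $f^0$ is linear, $\Gamma$ may be chosen so that $\beta^*=\beta^0$, hence $f^*=f^0$, $s^*=|S^0|$, and $E(f^*-f^0)^2=0$. This choice also renders the approximation hypotheses of Lemma \ref{quad} trivial. Dropping the nonnegative excess-loss and squared $\ell_2$-terms on the left of (\ref{quada}) and using $\lambda_1\geq 8\lambda_0$ then gives
\[\|\hat\beta-\beta^0\|_1\;\leq\;\frac{36L^2\lambda_0^2 |S^0|}{\lambda_1\phi_*^2}\;\in\; O\del[1]{\lambda_0|S^0|},\]
which tends to zero under the stated rate condition.

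The obstacle is not conceptual but a matter of bookkeeping: tracking the $n$-dependent polylogarithmic factors coming from $D_n$, $\log p$, and $\log n$ -- these appear because the quadratic loss is only locally Lipschitz, forcing the truncation step in Lemma \ref{quad}(b) -- and verifying that they do not spoil the polynomial rate conditions. A secondary point I would verify is that the choice $\beta^*=\beta^0$ in part (b) is consistent with all the remaining hypotheses of Lemma \ref{quad}, so that no hidden constraint appears in going to the linear case.
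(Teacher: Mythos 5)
Your argument follows the paper's own route essentially step for step: part (a) reduces to showing $\lambda_0^2 s^*\to 0$ via $D_n^2\in O(\log n+n^{2\tilde d})$, the probability statement follows from $\delta C_n^2=2\log n$, and part (b) exploits the freedom in $\Gamma$ to set $\beta^*=\beta^0$ so that the approximation error vanishes and (\ref{quada}) collapses to an $\ell_1$-bound. Part (a) and the probability argument are correct as written.

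The one place where your part (b) does not quite close is the final step: you arrive at $\|\hat\beta-\beta^0\|_1\in O(\lambda_0|S^0|)$ and assert this vanishes ``under the stated rate condition,'' but the stated condition $a+2\tilde d<1$ only forces $\lambda_0\to 0$ and places no restriction on $|S^0|$ (note that $b$ does not appear in it, in contrast to part (a)). The missing observation, which the paper makes explicitly, is that throughout Section \ref{subsecql} the covariate $x$ is one-dimensional, so a linear target has $s^*=s^0=1$; this is also what justifies writing $|\hat\beta-\beta^0|=\|\hat\beta-\beta^0\|_1$ in the statement. With $|S^0|=1$ your bound becomes $O(\lambda_0)$ and the condition $a+2\tilde d<1$ suffices, exactly as in the paper. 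If one instead wanted to allow $|S^0|\in O(n^b)$ with $b>0$, the condition would have to be strengthened (to $a+2b+2\tilde d<1$, matching the flavor of Corollary \ref{corlinasym}), so the gap is real but entirely local and easily repaired.
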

The assumption $F_{C_n}\in O(n^{\tilde{d}})$ is not overly restrictive since even when $f^0(x)=\exp(\mu x^2)$ for some $\mu>0$ one has that $F_{C_n}=\sup_{|x|\leq C_n}|f^0(x)|=\exp(2\frac{\mu}{\delta}\log(n))=n^{2\frac{\mu}{\delta}}$. So the assumption of a polynomial growth of $F_{C_n}$ can be satisfied even by functions increasing exponentially fast by choosing $\tilde{d}=2\frac{\mu}{\delta}$. In the case where we only assume that the covariates and the error terms have bounded $r$th moments a similar argument shows that $f^0$ can increase at the rate of an $r$th degree polynomial. We have assumed in Corollary \ref{quadcor} that the assumptions of Lemma \ref{quad} are valid. Recall that we know this is the case if $f^0$ belongs to a H\"{o}lder class of order $r>1/2$ and the covariates have support $[-1,1]$. Then we may choose $\mathcal{F}_L$ to consist of polynomials of degree $p$ and $\Gamma=\cbr[0]{1,...,s}$ to meet the assumptions of Lemma \ref{quad}. 

Part a) of Corollary \ref{quadcor} is similar to Corollary \ref{corloss} but can not be deduced from it since the conditions of the latter Corollary are not satisfied in the case of quadratic loss. Similarly, part b) of Corollary \ref{quadcor} resembles Corollary \ref{corlinasym} but can not be deduced from it since the conditions of the latter corollary are not satisfied in the case of quadratic loss. Recall as well that we also assume throughout Section \ref{subsecql} that $x$ is one-dimensional which explains the equality in part b) above.

\subsection{Logistic regression}
Next we verify that the logistic loss satisfies Assumptions 1-3.  This is done by verifying that the second derivative with respect to the first argument of the conditional loss function is bounded away from zero as discussed in the beginning of Section \ref{Examples}. Let $(Y_i, X_i)_{i=1}^n$ be i.i.d. As seen in Section \ref{setup} the loss in case of logistic regression is
\begin{equation}
\rho(f(x),y) = - y f(x) + \log (1+ \exp(f(x)).\label{5.1}
\end{equation}
Here $y,x$ represent the values of the random variable $Y_i$ and $X_i$.  Then we have the following result.

\begin{lemma}\label{logit}
Assume that $ \epsilon_0 \leq \pi(x) \leq 1 - \epsilon_0, \ \text{for all } x\in\mathcal{X}$
for some  $0 < \epsilon_0 \leq 1/2$,
and $\max_{1\leq j\leq p}\enVert[0]{\psi_j}_\infty \le K$. Set $\lambda_2=\frac{\lambda_1\sqrt{s^*}}{2 \enVert{\beta^*}_2}$ and take for some constant $L\geq 8$, $8 \lambda_0 \le \lambda_1 \le L \lambda_0$
and suppose that
\[ \frac{36 L^2 \lambda_0 s^*K}{\eta \phi_*^2c} \le 1.\]
for $c=\frac{\epsilon_0e^{-\eta}}{2\del[1]{1+e^\eta/\epsilon_0}^2}$. Assume that $\|f^* - f^0 \|_{\infty} \le \eta/2$, with $\frac{3}{2}\Xi (f^*) \leq \frac{9 \lambda_1^2s^*}{\phi_*^2 c}$ and finally that the population covariance matrix of the covariates $\Sigma=E(\psi(X)\psi'(X))$ has full rank. Then,

a) Assumptions 1-3 are valid and on the set $\tau$,

\begin{align}
\Xi (\hat{f}) + \lambda_1 \| \hat{\beta} - \beta^* \|_1 + \lambda_2 \|\hat{\beta}_{S^*} - \beta_{S^*}\|_2^2
\leq
6 \Xi (f^*) +  \frac{ 36  \lambda_1^2 s^*}{\phi_*^2 c}.\label{logitbound1}
\end{align}

b) if, furthermore, $ \frac{1}{n} \sum_{i=1}^n \max_{ 1 \le j \le p} E \psi_j^2 (X_i) \le 1$ one has that (\ref{logitbound1}) is valid with probability at least $1-\del[2]{\frac{1}{p}}$ upon choosing $\lambda_0=2d\sqrt{\frac{\log(p)}{n}}$ (where $d$ is the positive constant from Theorem \ref{pbound}).
\end{lemma}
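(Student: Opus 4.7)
The plan is to follow the same three-step recipe that worked for the quadratic loss in Lemma \ref{quad}: (i) verify the quadratic margin condition by bounding the second derivative of the conditional expectation of the loss, (ii) verify Assumption 2 by using the technical condition on $M^{*}K$, (iii) note Assumption 3 is immediate from $\Sigma$ having full rank, and then apply Theorem \ref{thm1} in (a) and Theorem \ref{pbound} in (b). Throughout I will use the general sufficient condition for a quadratic margin given at the beginning of Section \ref{Examples}, namely a uniform lower bound of $2c$ on the second derivative $l_{11}''(a,x)$ of the conditional risk in an $\eta$-neighborhood of $f^{0}(x)$.

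For Assumption 1 the key computation is as follows. Since the loss only depends on $x$ through $f(x)$, $l(a,x)=E[\rho(a,Y)\mid X=x]=-\pi(x)\,a+\log(1+e^{a})$, so $l_{1}'(a,x)=-\pi(x)+e^{a}/(1+e^{a})$ (which vanishes at $a=f^{0}(x)$, confirming that $f^{0}$ minimizes the theoretical risk) and $l_{11}''(a,x)=e^{a}/(1+e^{a})^{2}$. I need to lower-bound this uniformly for $|a-f^{0}(x)|\le \eta$. The assumption $\epsilon_{0}\le\pi(x)\le 1-\epsilon_{0}$ translates into $e^{f^{0}(x)}\in[\epsilon_{0}/(1-\epsilon_{0}),\,(1-\epsilon_{0})/\epsilon_{0}]$, so on the local neighborhood $u:=e^{a}\in[\epsilon_{0}e^{-\eta},\,e^{\eta}/\epsilon_{0}]$ (using $1-\epsilon_{0}\le 1$). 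The map $u\mapsto u/(1+u)^{2}$ is quasi-concave with maximum at $u=1$, hence its minimum over this interval is attained at an endpoint; the smaller of the two endpoint values is bounded below by $\epsilon_{0}e^{-\eta}/(1+e^{\eta}/\epsilon_{0})^{2}=2c$. So the margin condition holds with $G(u)=cu^{2}$, hence $H(v)=v^{2}/(4c)$. The main obstacle is exactly this uniform lower bound, and the value of $c$ in the statement is precisely what the quasi-concavity argument delivers.

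With $\lambda_{2}=\lambda_{1}\sqrt{s^{*}}/(2\|\beta^{*}\|_{2})$ the argument of $H$ in $\Delta^{*}$ simplifies to $6\lambda_{1}\sqrt{s^{*}}/\phi_{*}$, so $H(\cdot)=9\lambda_{1}^{2}s^{*}/(c\phi_{*}^{2})$, and the approximation hypothesis $\tfrac{3}{2}\Xi(f^{*})\le 9\lambda_{1}^{2}s^{*}/(\phi_{*}^{2}c)$ yields $\Delta^{*}\le 18\lambda_{1}^{2}s^{*}/(c\phi_{*}^{2})$, whence $M^{*}=\Delta^{*}/\lambda_{0}\le 18L^{2}\lambda_{0}s^{*}/(c\phi_{*}^{2})$. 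For any $\beta$ with $\|\beta-\beta^{*}\|_{1}\le M^{*}$, the triangle inequality and $\max_{j}\|\psi_{j}\|_{\infty}\le K$ give $\|f_{\beta}-f^{0}\|_{\infty}\le M^{*}K+\|f^{*}-f^{0}\|_{\infty}\le \tfrac{\eta}{2}+\tfrac{\eta}{2}=\eta$, where the first bound uses the technical condition $36L^{2}\lambda_{0}s^{*}K/(\eta\phi_{*}^{2}c)\le 1$ exactly once. This verifies Assumption 2; Assumption 3 is immediate from the discussion after (\ref{reseig}). Theorem \ref{thm1} then delivers (\ref{logitbound1}) once the explicit value of $H$ is substituted.

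For part (b) I need to check that the loss is Lipschitz in $f(x)$ so that Theorem \ref{pbound} applies. Writing $\gamma(y,a)=-ya+\log(1+e^{a})$, its derivative $\partial_{a}\gamma(y,a)=-y+e^{a}/(1+e^{a})\in(-1,1)$ for $y\in\{0,1\}$, so $\gamma$ is Lipschitz in $a$ with constant at most $D\le 2$. Combined with the assumed boundedness of the basis functions and the hypothesis $\tfrac{1}{n}\sum_{i}\max_{j}E\psi_{j}^{2}(X_{i})\le 1$, Theorem \ref{pbound} gives $P(\tau)\ge 1-1/p$ for $\lambda_{0}=dD\sqrt{\log(p)/n}\le 2d\sqrt{\log(p)/n}$, which is exactly the choice in the statement. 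Combining with part (a) yields the conclusion. No step beyond the margin-constant computation should present real difficulty.
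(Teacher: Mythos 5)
Your proposal is correct and follows essentially the same route as the paper's proof: a quadratic margin constant obtained by lower-bounding $e^{a}/(1+e^{a})^{2}$ on the local neighborhood (your quasi-concavity argument and the paper's direct numerator/denominator bounds yield the same $2c=\epsilon_0 e^{-\eta}/(1+e^{\eta}/\epsilon_0)^2$), the identical $M^{*}K\le\eta/2$ verification of Assumption 2, full rank of $\Sigma$ for Assumption 3, and Theorems \ref{thm1} and \ref{pbound} with Lipschitz constant $D=2$ for parts (a) and (b).
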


Note that except for $\epsilon_0 \leq \pi(x) \leq 1 - \epsilon_0$ for all $x\in\mathcal{X}$ the assumptions of Lemma \ref{logit} are a subset of the ones in Lemma \ref{quad}. Hence, we know that they are satisfied when the covariates have support $[-1,1]$ and $f^0$ belongs to a H\"{o}lder class of order $r>1/2$ since then we may choose $\psi_j(x)=x^j,\ j=1,...,p$ and $\Gamma=\cbr[0]{1,...,s}$ as discussed after Lemma \ref{quad}. 

(\ref{logitbound1}) in Lemma \ref{logit} gives upper bounds on the excess risk as well as the estimation error in the logit model. Corollaries \ref{corloss}-\ref{corlinasym} can now be used to establish asymptotic results for these quantities.

By the definition of $\lambda_0$ it is not difficult to see that the second term on the right hand side tends to zero if $s^*\in o(n/\log(p))$ and $\phi_*$ is bounded away from zero. This in turn reveals that $\Xi(\hat{f})$ is of the same order of magnitude as the excess risk of the oracle $\Xi (f^*)$ which parallels part a) in Corollary \ref{quadcor}. Mimicking the arguments in part b) of that Corollary one sees that 
\begin{align*}
\| \hat{\beta} - \beta^0 \|_1\to 0
\end{align*}
with probability tending to one if $\phi_*^2$ is bounded away from 0 and the target $f^0$ is linear as long as $s^*\log(p)/n\to 0$.

\section{Conclusion}
This paper has established an oracle inequality for empirical loss minimization penalized by the elastic net penalty. This inequality is valid for convex loss functions and non-linear target functions and we stress again that this is a finite sample result. We have also seen that the results for a Lasso penalty are a special case of ours. For the case where the target is linear the oracle inequality can be used to deduce finite sample upper bounds on the estimation error of $\hat{\beta}$. The oracle inequality can also be used to show that the excess loss of our estimator is asymptotically of the same order as that of the oracle. Also, when the target is linear we give sufficient conditions for $\hat{\beta}$ to be consistent for $\beta^0$ 

Furthermore, we explain how to construct a thresholded elastic net estimator which can perform consistent variable selection when the truth is linear. To illustrate the generality of our framework we give two examples of settings which fit into our theory --  the quadratic and the logistic loss. In the case of a quadratic loss function we allow for heteroscedastic error terms. In addition, we show how our results provide new insights into nonparametric series estimation which turns out to be encompassed as a special case of our theory. More precisely, we provide an upper bound on the mean square error of the elastic net series estimator.

Future avenues of research include, but are not limited to, proposing theoretically justified data driven methods for the choice of tuning parameters in the case where the truth is not linear. Also, bounds which are valid for dependent data are interesting for time series analysts. Finally, extending our results to allow for heteroscedasticity in other loss functions than the quadratic one is of interest for practical purposes.

\section*{Appendix}\label{appendix}
We start by proving Theorem \ref{thm1}. As a by-product of the proof we extend the basic inequality (6.28) of \cite{vdGB11} to cover the elastic net using a refined proof technique.
\begin{proof}[Proof of Theorem \ref{thm1}]
The proof consists of three steps. In the first step we set up a basic inequality which we shall use in steps two and three. In the second and third steps we analyze two possibilities:  the penalties multiplied by estimation errors being greater than or equal to the oracle rate $\Delta^*$, or these penalties multiplied by estimation errors being strictly less than the oracle rate, respectively. Throughout the appendix we shall let $S_c$ denote the complement of the set $S$ for any set $S$ \footnote{It will be clear from the context of which set $S$ is a subset.}.%The structure of the proof is similar to the one of Theorem 6.4 in Buhlmann and van de Geer (2010). But the substance of our proof is different. First in step 1, additional $l_2^2$ penalty with respect to $l_1$ case complicates obtaining the basic inequality. The proof of basic inequality does not  follow directly and addition of (\ref{t11})-(\ref{t12}) are used to get the basic inequality.  In  Steps 2 and 3, our starting points (bounds) are different from the $l_1$ case of Buhlmann and van de Geer (2010). In Step 2, we also see that using norm inequality between $l_1$ and $l_2$ are new and helpful in deriving the oracle bound. The usage of adaptive restricted eigenvalue condition is also new here in the convex loss case compared to $l_1$ case in Buhlmann and van de Geer (2010). The compatibility condition used in $l_1$ case results in not sharp bounds as we obtained here. There is more discussion of the issue  after Theorem 1. 

{\bf Step 1}. This step uses the convexity of the loss function $\rho_f$. First, define
\[ \tilde{\beta} = t \hat{\beta} + (1-t) \beta^*,\]
where
\[ t = \frac{M^*}{M^* + \| \hat{\beta} - \beta^* \|_1}.\]
To simplify notation define $\tilde{\Xi} = \Xi (f_{\tilde{\beta}})$ and $ \Xi^* = \Xi (f_{\beta^*})$ %Note that all the excess risk in (\ref{1}) is defined with respect to the target function. So $\tilde{\Xi}$ measures the difference between the expectation of the convex loss at $\tilde{\beta}$ with respect to convex loss at target value $f^0$, and $\Xi (\beta^*)$ roughly measures the approximation error of using the oracle versus the target function. 
and set $\tilde{f}= f_{\tilde{\beta}}$, $\hat{f} = f_{\hat{\beta}}$, $f^* = f_{\beta^*}$. By the minimizing property of $\hat{\beta}$ one has that
\begin{align}
P_n\rho_{\hat{f}}+\lambda_1\|\hat{\beta}\|_1+\lambda_2\|\hat{\beta}\|_2^2
\leq
P_n\rho_{f^*}+\lambda_1\|\beta^*\|_1+\lambda_2\|\beta^*\|_2^2 \label{iqaux}
\end{align}
By the convexity of $\beta \mapsto \rho_\beta$  and the linearity of the $P_n$-integral it follows that
\begin{align}
P_n\rho_{\tilde{f}}+\lambda_1\|\tilde{\beta}\|_1+\lambda_2\|\tilde{\beta}\|_2^2
&\leq
tP_n\rho_{\hat{f}}+(1-t)P_n\rho_{f^*}+t\lambda_1\|\hat{\beta}\|_1+(1-t)\lambda_1\|\beta^*\|_1+t\lambda_2\|\hat{\beta}\|_2^2+(1-t)\lambda_2\|\beta^*\|_2^2\notag \\
&\leq 
P_n\rho_{f^*}+\lambda_1\|\beta^*\|_1+\lambda_2\|\beta^*\|_2^2 \label{conv}
\end{align}
where the second inequality follows from (\ref{iqaux}). Rearranging (\ref{conv}) yields
\begin{equation}
[ P_n \rho_{\tilde{f}} - P_n \rho_{f^*} ] + \lambda_1 \| \tilde{\beta} \|_1 + \lambda_2 \| \tilde{\beta} \|_2 
\le \lambda_1 \| \beta^* \|_1 + \lambda_2 \| \beta^* \|_2^2.\label{t2}
\end{equation}
Next, note that 
\begin{equation}
 - [V_n (\tilde{\beta}) - V_n (\beta^*)] = - \sbr[3]{\frac{1}{n} \sum_{i=1}^n (\rho_{\tilde{f}}(Z_i) - E \rho_{\tilde{f}}(Z_i))
-\frac{1}{n} \sum_{i=1}^n (\rho_{f^*}(Z_i) - E \rho_{f^*}(Z_i))}.\label{t2a}
\end{equation}
and recall that $\Xi^* = n^{-1} \sum_{i=1}^n (E \rho_{f^*} - E \rho_{f^0}).\label{t2b}$
Adding $- [V_n (\tilde{\beta}) - V_n (\beta^*)]$ and 
$\Xi^*$ to both sides of (\ref{t2})
\begin{equation}
- [V_n (\tilde{\beta}) - V_n (\beta^*)] + [ P_n \rho_{\tilde{f}} - P_n \rho_{f^*} ] + \Xi^*
+ \lambda_1 \| \tilde{\beta} \|_1 + \lambda_2 \| \tilde{\beta} \|_2^2 \le 
- [V_n (\tilde{\beta}) - V_n (\beta^*)] + \Xi^* + \lambda_1 \| \beta^* \|_1 + \lambda_2 \| \beta^* \|_2^2.\label{t3}
\end{equation}
Now note that by (\ref{t2a}) and the definition of $\Xi^*$ on gets
\[ - [ V_n (\tilde{\beta}) - V_n (\beta^*) ] + [ P_n \rho_{\tilde{f}} - P_n \rho_{f^*}] + \Xi^* = \tilde{\Xi}.\]
Using this in (\ref{t3}) gives
\begin{equation}
\tilde{\Xi} + \lambda_1 \| \tilde{\beta} \|_1 + \lambda_2 \| \tilde{\beta} \|_2^2 \le 
-[V_n (\tilde{\beta}) - V_n (\beta^*)] + \lambda_1 \| \beta^* \|_1 + \lambda_2 \| \beta^* \|_2^2 + \Xi^*.\label{t4}
\end{equation}
Next, we bound $- [ V_n (\tilde{\beta}) - V_n (\beta^*) ]$. To do so we first note that
\begin{align*}
 \| \tilde{\beta} - \beta^* \|_1 = \| t ( \hat{\beta} - \beta^*) \|_1 
 =  \frac{M^*}{M^* + \| \hat{\beta} - \beta^* \|_1} \|  \hat{\beta} - \beta^*\|_1
  \le  M^*.
\end{align*}
Hence,
\begin{align*}
- [ V_n (\tilde{\beta}) - V_n (\beta^*) ] \le \sup_{ \| \beta - \beta^* \|_1 \le M^*} | V_n (\beta ) - V_n (\beta^*)|:=Z_{M^*}.
\end{align*} 
So on $\tau= \{ Z_{M^*} \le \lambda_0 M^* \}$ one may rewrite (\ref{t4}) as
\begin{equation}
\tilde{\Xi} + \lambda_1 \| \tilde{\beta} \|_1 + \lambda_2 \| \tilde{\beta} \|_2^2 \le 
\lambda_0 M^* + \lambda_1 \| \beta^* \|_1 + \lambda_2 \| \beta^* \|_2^2 + \Xi^*.\label{t5}
\end{equation}
Now subtract $\lambda_1\enVert[0]{\tilde{\beta}_{S^*}}_1$ from both sides of (\ref{t5}). Using that $\enVert[0]{\tilde{\beta}}_1-\enVert[0]{\tilde{\beta}_{S^*}}_1=\enVert[0]{\tilde{\beta}_{S^*_c}}_1$ and the continuity of the norm to conclude that $\enVert[0]{\beta^*}_1-\enVert[0]{\tilde{\beta}_{S^*}}_1\leq \enVert[0]{\beta^*-\tilde{\beta}_{S^*}}_1=\enVert[0]{\beta^*_{S^*}-\tilde{\beta}_{S^*}}_1$ one gets
%To reach the basic inequality note that by the reverse triangle inequality
%\begin{align}
%\| \tilde{\beta}\|_1  =   \| \tilde{\beta}_{S_c^*} \|_1 + \| \tilde{\beta}_{S^*} \|_1 
% \ge  \| \beta^*_{S^*} \|_1 - \| \tilde{\beta}_{S^*} - \beta^*_{S^*} \|_1 + \| \tilde{\beta}_{S_c^*} \|_1 \label{t6}
%\end{align}
%Next note that $\| \beta^*_{S^*} \|_1 = \| \beta^* \|_1$.
%By (\ref{t6}) we have 
%\begin{equation}
%\lambda_1 \| \beta^* \|_1 + \lambda_1 \| \tilde{\beta}_{S_c^*} \|_1 \le \lambda_1 \| \tilde{\beta} \|_1 + \lambda_1 
%\| \tilde{\beta}_{S^*} - \beta^*_{S^*} \|_1.\label{t7}
%\end{equation}
%Adding $\lambda_1 \|\tilde{\beta}_{S^*} - \beta_{S^*} \|_1$ to the left and right hand  side of (\ref{t5}) yields 
%\begin{equation}
%\tilde{\Xi} + \lambda_1 \| \tilde{\beta} \|_1 + \lambda_1 \|\tilde{\beta}_{S^*} - \beta^*_{S^*} \|_1+\lambda_2 \| \tilde{\beta} \|_2^2 \le 
%\lambda_0 M^* + \lambda_1 \|\tilde{\beta}_{S^*} - \beta^*_{S^*} \|_1+\lambda_1 \| \beta^* \|_1 + \lambda_2 \| \beta^* \|_2^2 + \Xi^*.\label{t8}
%\end{equation}
%So using (\ref{t7}) on the left hand side of (\ref{t8}) gives 
%
%\begin{equation}
%\tilde{\Xi} +\lambda_1 \| \beta^* \|_1 + \lambda_1 \| \tilde{\beta}_{S_c^*} \|_1+\lambda_2 \| \tilde{\beta} \|_2^2 
%\le 
%\lambda_0 M^* + \lambda_1 \|\tilde{\beta}_{S^*} - \beta^*_{S^*} \|_1+\lambda_1 \| \beta^* \|_1 + \lambda_2 \| \beta^* \|_2^2 + \Xi^*.\label{t9} 
%\end{equation}
%Canceling $\lambda_1 \| \beta^* \|_1$ on both sides provides
\begin{equation}
\tilde{\Xi} + \lambda_1 \| \tilde{\beta}_{S_c^*} \|_1+\lambda_2 \| \tilde{\beta} \|_2^2 
\le 
\lambda_0 M^* + \lambda_1 \|\tilde{\beta}_{S^*} - \beta_{S^*} \|_1 + \lambda_2 \| \beta^* \|_2^2 + \Xi^*.\label{t10} 
\end{equation}
Furthermore, by continuity of the norm,
\begin{equation}
0\leq\envert[1]{\| \beta^*_{S^*}\|_2  - \| \tilde{\beta}_{S^*} - \beta^*_{S^*} \|_2\|}\leq \enVert[0]{\tilde{\beta}_{S^*}}_2\leq \enVert[0]{\tilde{\beta}}_2\label{t11}
\end{equation}
Squaring (\ref{t11}) yields,
\begin{equation}
\| \tilde{\beta}\|_2^2\geq \| \tilde{\beta}_{S^*} \|_2^2 \ge \|\beta^*_{S^*}\|_2^2 + \| \tilde{\beta}_{S^*} - \beta^*_{S^*} \|_2^2 - 2 \|\beta^*_{S^*}\|_2 \|\tilde{\beta}_{S^*}
- \beta^*_{S^*}\|_2.\label{t11a}
\end{equation}
Using (\ref{t11a}) on the left hand side of (\ref{t10}) and rearranging yields 
\[
\tilde{\Xi} + \lambda_1 \| \tilde{\beta}_{S_c^*} \|_1 + \lambda_2 \| \beta^*_{S^*} \|_2^2 + \lambda_2  \| \tilde{\beta}_{S^*} - \beta^*_{S^*} \|_2^2  \le 
\lambda_0 M^* + \lambda_1 \| \tilde{\beta}_{S^*} - \beta^*_{S^*} \|_1 + \lambda_2 \| \beta^* \|_2^2 + 2 \lambda_2 \|\beta^*_{S^*}\|_2 
\| \tilde{\beta}_{S^*} - \beta^*_{S^*} \|_2+  \Xi^*.\]
Noting that $\|\beta^*_{S^*} \|_2 = \| \beta^* \|_2$, the inequality above simplifies to
\begin{equation}
\tilde{\Xi} + \lambda_1 \| \tilde{\beta}_{S_c^*} \|_1 + \lambda_2  \| \tilde{\beta}_{S^*} - \beta^*_{S^*} \|_2^2  \le 
\lambda_0 M^* + \lambda_1 \| \tilde{\beta}_{S^*} - \beta^*_{S^*} \|_1 + 2 \lambda_2 \| \beta^*\|_2 \| \tilde{\beta}_{S^*} -  \beta^*_{S^*} \|_2 + \Xi^*.\label{t12}
\end{equation}
We shall refer to (\ref{t12}) as the Basic Inequality. In the next steps we use the adaptive restricted eigenvalue condition and the margin condition to rewrite the Basic Inequality. We consider 2 cases, which will constitute steps 2 and 3. They depend on the behaviour of $\lambda_1 \| \tilde{\beta}_{S^*} - \beta^*_{S^*} \|_1 + 2 \lambda_2
\| \beta^* \|_2  \| \tilde{\beta}_{S^*} - \beta^*_{S^*} \|_2$.\\

{\bf Step 2}. We consider the case of 
\begin{equation}
\lambda_1 \| \tilde{\beta}_{S^*} - \beta^*_{S^*} \|_1 + 2 \lambda_2  \|\beta^* \|_2 \| \tilde{\beta}_{S^*} - \beta^*_{S^*} \|_2 \ge \Delta^* = \lambda_0 M^*.\label{step2} 
\end{equation}
while Step 3 considers the reverse inequality. First, it will be  shown that $
 \| \tilde{\beta}_{S_c^*} \|_1   \le L_n
 \| \tilde{\beta}_{S^*} - \beta_{S^*} \|_1$
which allows us to use the adaptive restricted eigenvalue condition. To this end note that from the definition of the oracle rate $\lambda_0M^*=\Delta^* \ge \Xi^*$ since $H(\cdot)\geq 0$. Using this and $\tilde{\Xi} \ge 0$ in (\ref{t12}) yields
\begin{equation}
\lambda_1 \| \tilde{\beta}_{S_c^*} \|_1   \le 
2 \Delta^*  + \lambda_1 \| \tilde{\beta}_{S^*} - \beta^*_{S^*} \|_1 + 2 \lambda_2 \| \beta^*\|_2 \| \tilde{\beta}_{S^*} -  \beta^*_{S^*} \|_2.\label{t13}
\end{equation}
%To get (\ref{t13}) from (\ref{t12}) we use the following
%\begin{equation}
%\Xi^* \le \Delta^* = \frac{3}{2} \Xi^* + H \left( \frac{4 \lambda_1  \sqrt{s}_* +\lambda_2 \|\beta^*\|_2}{\phi_*} \right).\label{t14}
%\end{equation} 
%This is true if 
%\begin{equation}
%0 \le \frac{1}{2} \Xi^* + H\left( \frac{4 \lambda_1  \sqrt{s}_* +\lambda_2 \|\beta^*\|_2}{\phi_*} \right).\label{t15}
%\end{equation}
%But this is true by $\Xi^*$ being nonnegative by definition and since $H(.)\ge 0$ as shown after margin condition in the text. 
%This last equation is true first by applying the {\it Fenchel's inequality} below (p.125 of Buhlmann and van de Geer, 2010, or p.105 of Rockafellar (1970)), with 
%$u=\|f^* - f^0\|/2$ and $v=\frac{\lambda_1 (4 \sqrt{s}_* + 2)}{\phi_*} $,
%\[ 0 < uv \le G(u) + H(v),\]
%but then 
%\[ G(u) = G( \| f^* - f^0 \|/2) \le \frac{1}{2} G( \| f^* - f^0 \|) + \frac{1}{2} G(0) \le \Xi^*/2,\]
%by the margin condition and $G(0)=0$. So combine the last two inequalities to have
%\[ 0 < \Xi^*/2 + H \left( \frac{\lambda_1 (4 \sqrt{s}_* + 2)}{\phi_*} \right).\]
Use (\ref{step2}) to rewrite the right hand side of (\ref{t13}) as 
\begin{equation}
\lambda_1 \| \tilde{\beta}_{S_c^*} \|_1   \le 
3(\lambda_1 \| \tilde{\beta}_{S^*} - \beta^*_{S^*} \|_1 + 2 \lambda_2 \| \beta^*\|_2 \| \tilde{\beta}_{S^*} -  \beta^*_{S^*} \|_2).\label{t16}
\end{equation}
Using $ \lambda_1 \| \tilde{\beta}_{S^*} - \beta^*_{S^*} \|_1 \le \lambda_1 \sqrt{s^*} \| \tilde{\beta}_{S^*} - \beta^*_{S^*} \|_2$ in (\ref{t16}) one gets 
\[ \lambda_1 \| \tilde{\beta}_{S_c^*} \|_1   \le 
3(\lambda_1 \sqrt{s^*}  + 2 \lambda_2 \| \beta^*\|_2 \|)\| \tilde{\beta}_{S^*} -  \beta^*_{S^*} \|_2\]
which implies 
\begin{equation}
 \| \tilde{\beta}_{S_c^*} \|_1   \le L_n
 \| \tilde{\beta}_{S^*} - \beta^*_{S^*} \|_2,\label{t17}
\end{equation}
where $L_n  = 3( \sqrt{s^*} + 2 \frac{\lambda_2}{\lambda_1}  \| \beta^*\|_2)$.
To bound the right hand side of (\ref{t12}) from above note that by the adaptive restricted eigenvalue condition
\begin{eqnarray}
\lambda_1 \| \tilde{\beta}_{S^*} - \beta^*_{S^*} \|_1 +  2 \| \beta^*\|_2 \lambda_2 \| \tilde{\beta}_{S^*} -  \beta^*_{S^*} \|_2 
& \le & \lambda_1 \sqrt{s^*} \| \tilde{\beta}_{S^*} - \beta^*_{S^*} \|_2 +  2 \lambda_2 \| \beta^* \|_2 \| \tilde{\beta}_{S^*} -  \beta^*_{S^*} \|_1 \nonumber \\
& \le & (\lambda_1 \sqrt{s^*} + 2 \lambda_2 \| \beta^*\|_2)   \| \tilde{\beta}_{S^*} - \beta^*_{S^*} \|_2\notag \\
&\leq&
(\lambda_1 \sqrt{s^*} + 2 \lambda_2 \| \beta^*\|_2) \frac{  \| f_{\tilde{\beta}} - f_{\beta^*} \|}{\phi_*}
\label{t19}
\end{eqnarray}
%Note that in least squares penalty case, Hebiri  and van de Geer (2011) used the adaptive restrictive eigenvalue condition as well. 
Inserting (\ref{t19}) in (\ref{t12}) yields 
\begin{equation}
\tilde{\Xi} + \lambda_1 \| \tilde{\beta}_{S_c^*} \|_1 + \lambda_2 \| \tilde{\beta}_{S^*} - \beta^*_{S^*}\|_2^2  \le 
\lambda_0 M^* + (\lambda_1 \sqrt{s^*}+ 2 \lambda_2 \|\beta^*\|_2) \frac{ \| f_{\tilde{\beta}_{S^*}} - f_{\beta^*} \|}{\phi_*} + \Xi^*.\label{t21}
\end{equation}
Now add  $\lambda_1 \| \tilde{\beta}_{S^*} - \beta^*_{S^*} \|_1$ to both sides of (\ref{t21})
\begin{equation}
\tilde{\Xi} + \lambda_1 \| \tilde{\beta}_{S_c^*} \|_1 + \lambda_1 \| \tilde{\beta}_{S^*} - \beta^*_{S^*} \|_1 +
\lambda_2 \| \tilde{\beta}_{S^*} - \beta^*_{S^*}\|_2^2 \le 
\lambda_0 M^* + \lambda_1 \| \tilde{\beta}_{S^*} - \beta^*_{S^*} \|_1+(\lambda_1 \sqrt{s^*}+ 2 \lambda_2 \|\beta^*\|_2) \frac{ \| f_{\tilde{\beta}} - f_{\beta^*} \|}{\phi_*} + \Xi^*.\label{t22}
\end{equation}
and note that the second term on the right hand side of (\ref{t22}) can be  bounded from above as follows using the adaptive restricted eigenvalue condition
\begin{equation}
\lambda_1 \| \tilde{\beta}_{S^*} - \beta^*_{S^*} \|_1  \le \lambda_1 \sqrt{s}_* \frac{ \| f_{\tilde{\beta}} - f_{\beta^*} \|}{\phi_*}.\label{t23}
\end{equation}
Using (\ref{t23}) in (\ref{t22}) to get 
\begin{equation}
\tilde{\Xi} + \lambda_1 \| \tilde{\beta}_{S_c^*} \|_1 + \lambda_1 \| \tilde{\beta}_{S^*} - \beta^*_{S^*} \|_1 + 
\lambda_2 \| \tilde{\beta}_{S^*} - \beta^*_{S^*}\|_2^2 \le 
\lambda_0 M^* +( 2 \lambda_1 \sqrt{s^*} + 2 \lambda_2 \|\beta^*\|_2)   \frac{ \| f_{\tilde{\beta}} - f_{\beta^*} \|}{\phi_*} + \Xi^*.\label{t24}
\end{equation}
%Next we will apply the margin condition to the second term on the right hand side of  (\ref{t24}). This will help us composing the excess risk to two components, one emanating from estimation error, and the other from approximation error of the target by linear function. 
Since we assume  $f^* \in {\bf F}_{\text{local}}$, and since $f_{\tilde{\beta}} \in {\bf F}_{\text{local}}$ (because $\enVert[0]{\tilde{\beta}-\beta^*}\leq M^*$ as shown in Step 1), we get using (\ref{fi}) and the margin condition that
\begin{eqnarray}
(2 \lambda_1 \sqrt{s^*}+ 2 \lambda_2 \| \beta^*\|_2)) \frac{  \| f_{\tilde{\beta}_{S^*}} - f_{\beta^*} \|}{\phi_*} & = &  \frac{ \| f_{\tilde{\beta}_{S^*}} - f_{\beta^*} \|}{2} \left[ \frac{4 \lambda_1 \sqrt{s^*}+ 4 \lambda_2 
\|\beta^*\|_2}{\phi_*} \right] \nonumber \\
& \le & G \left( \frac{ \| f_{\tilde{\beta}} - f_{\beta^*} \|}{2} \right) + H \left( \frac{4 \lambda_1 \sqrt{s^*}+ 4 \lambda_2 \|\beta^*\|_2}{\phi_*} \right) \nonumber \\
& \le & \tilde{\Xi}/2 + \Xi^*/2 + H \left( \frac{4 \lambda_1\sqrt{s^*} + 4 \lambda_2 \|\beta^*\|_2}{\phi_*} \right),\label{t25}
\end{eqnarray}
where we used (\ref{fi}) for the first inequality, with $u =\frac{ \| f_{\tilde{\beta}_{S^*}} - f_{\beta^*} \|}{2}$ and $v =\frac{4 \lambda_1 \sqrt{s^*} + 4 \lambda_2 \|\beta^*\|_2}{\phi_*}$, and the second estimate uses the triangle inequality, convexity of $G(\cdot)$ and the margin condition. Substituting (\ref{t25}) into (\ref{t24}) gives
\begin{equation}
\tilde{\Xi} + \lambda_1 \| \tilde{\beta}_{S_c^*} \|_1 + \lambda_1 \| \tilde{\beta}_{S^*} - \beta^*_{S^*} \|_1 +
 \lambda_2 \| \tilde{\beta}_{S^*} - \beta^*_{S^*} \|_2^2 \le
\lambda_0 M^*+3\Xi^*/2+H \left( \frac{4 \lambda_1\sqrt{s^*} + 4\lambda_2 \|\beta^*\|_2}{\phi_*} \right)+\tilde{\Xi}/2.\label{t26}
\end{equation}
Next, note that  $\| \tilde{\beta}_{S_c^*} \|_1 + \| \tilde{\beta}_{S^*} - \beta^*_{S^*} \|_1 = \| \tilde{\beta} - \beta^* \|_1$ such that (\ref{t26})  can be written as
\[ \tilde{\Xi} + \lambda_1 \| \tilde{\beta} - \beta^* \|_1+ \lambda_2 \| \tilde{\beta}_{S^*} - \beta^*_{S^*} \|_2^2  \le \lambda_0 M^* + 3 \Xi^*/2+H \left( \frac{4 \lambda_1 \sqrt{s^*} + 4\lambda_2 \|\beta^*\|_2}{\phi_*} \right)+\tilde{\Xi}/2.\]
Now we use that $\Delta^*=\lambda_0M^* = 3\Xi^*/2+
H \left( \frac{4 \lambda_1 \sqrt{s^*}+ 4  \lambda_2 \|\beta^*\|_2}{\phi_*} \right)$ to rewrite the above inequality as 
\[ \tilde{\Xi} + \lambda_1 \| \tilde{\beta} - \beta^* \|_1 +\lambda_2 \| \tilde{\beta}_{S^*} - \beta^*_{S^*} \|_2^2 \le 2 \Delta^* + \tilde{\Xi}/2.\]
The above can be rewritten as 
\begin{equation}
\tilde{\Xi}/2 + \lambda_1 \| \tilde{\beta} - \beta^* \|_1 +\lambda_2 \| \tilde{\beta}_{S^*} - \beta^*_{S^*} \|_2^2\le 2 \Delta^*.\label{t27}
\end{equation}
The inequality (\ref{t27}) yields the desired oracle inequality but for $\tilde{\beta}$ instead of $\hat{\beta}$. However, it also follows from (\ref{t27}) (using $\Delta^*=\lambda_0M^*$) that 
\[  \lambda_1 \| \tilde{\beta} - \beta^* \|_1 \le 2 \Delta^* = 2 \lambda_0 M^*\]
which in turn yields (using $\lambda_1 \ge 4 \lambda_0$)
\[ \| \tilde{\beta} - \beta^* \|_1 \le 2 \frac{\lambda_0}{\lambda_1} M^* \le M^*/2.\]
Next, note that by the definitions of $\tilde{\beta}$ and $t$
\begin{eqnarray*}
\tilde{\beta} - \beta^*  =  t \hat{\beta} +(1-t) \beta^* - \beta^* = t (\hat{\beta} - \beta^*) 
 =  \frac{M^*}{M^* + \| \hat{\beta} - \beta^* \|_1 } ( \hat{\beta} - \beta^*).
\end{eqnarray*}
Hence,
\begin{align*}
\frac{M^*}{M^* + \| \hat{\beta} - \beta^* \|_1 } \|\hat{\beta} -\beta^* \|_1 = \| \tilde{\beta} - \beta^* \|_1\leq M^*/2
\end{align*}
which upon rearranging yields  $\| \hat{\beta} - \beta^* \|_1 \le M^*$. But this means that all the above derivations are valid with $\hat{\beta}$ replacing $\tilde{\beta}$ by simply starting from  (\ref{iqaux}) instead of (\ref{conv}). In particular, (\ref{t27}) yields
\begin{equation}
 \Xi (\hat{f}) + 2 \lambda_1 \| \hat{\beta} - \beta^* \|_1 +2 \lambda_2 \| \tilde{\beta}_{S^*} - \beta^*_{S^*} \|_2^2\le 4 \Delta^*= 6 \Xi (f^*) + 4  H \left( \frac{4 \lambda_1 \sqrt{s^*} +  4 \lambda_2 \|\beta^*\|_2}{\phi_*} \right).\label{t2o}
\end{equation}
which implies the bound in Theorem \ref{thm1}.

{\bf Step 3}. Here we consider the case
\begin{equation}
\lambda_1 \| \tilde{\beta}_{S^*} - \beta^*_{S^*} \|_1 + \lambda_2 \|\beta^*\|_2 \| \tilde{\beta}_{S^*} - \beta^*_{S^*} \|_2 < \Delta^*.\label{t31}
\end{equation}
As in Step 2 we note that $\Delta^* \ge \Xi^*$ by the definition of $\Delta^*$. Using also that $\lambda_0 M^* = \Delta^*$ the Basic Inequality in (\ref{t12}) can be written as,
\begin{equation}
\tilde{\Xi} + \lambda_1 \| \tilde{\beta}_{S_c^*} \|_1 +\lambda_2 \| \tilde{\beta}_{S^*} - \beta^*_{S^*} \|_2^2
 \le \lambda_0 M^* + \Delta^* + \Xi^* \le 
\lambda_0 M^* + 2 \Delta^* = 3 \Delta^*.\label{t32}
\end{equation} 
Then adding $\lambda_1 \| \tilde{\beta}_{S^*} - \beta^*_{S^*} \|_1$ to both sides, and noting that 
$\|\tilde{\beta}_{S_c^*}\|_1 + \| \tilde{\beta}_{S^*} - \beta^*_{S^*} \|_1 
= \| \tilde{\beta} - \beta^* \|_1$ one gets
\begin{align}
\tilde{\Xi} + \lambda_1 \| \tilde{\beta} - \beta^* \|_1 + \lambda_2 \| \tilde{\beta}_{S^*} - \beta^*_{S^*} \|_2^2\le 3 \Delta^* + \lambda_1 \| \tilde{\beta}_{S^*} - \beta^*_{S^*} \|_1
\leq 4\Delta^*\label{t32a}.
\end{align}
where the last inequality follows from (\ref{t31}). This is the desired oracle inequality but for $\tilde{\beta}$ instead of $\hat{\beta}$. However, it also follows from (\ref{t32a}) that 
\begin{equation}
\lambda_1 \| \tilde{\beta} - \beta^* \|_1 \le 4 \Delta^*.\label{t33}
\end{equation}
such that
\[ \| \tilde{\beta} - \beta^* \|_1 \le \frac{4 \Delta^*}{\lambda_1} = \frac{ 4 \lambda_0 M^*}{\lambda_1} \le M^*/2,\]
by $\lambda_0 M^* = \Delta^*$ and $\lambda_1 \ge 8 \lambda_0$. By the same arguments as in step 2 it now follows that $\enVert[0]{\hat{\beta}-\beta^*}_1\leq M^*$. This implies that all the above arguments can be repeated with $\hat{\beta}$ replacing $\tilde{\beta}$. In particular, (\ref{t32a}) yields
\begin{equation}
\Xi (\hat{f}) + \lambda_1 \| \hat{\beta} - \beta^* \|_1 +\lambda_2 \| \tilde{\beta}_{S^*} - \beta_{S^*} \|_2^2\le 4 \Delta^*.\label{t34}
\end{equation}
which is the desired inequality. 
\end{proof}

\begin{proof}[{\bf Proof of Theorem \ref{pbound}}]
Set
\begin{align}
\zeta =  D\sbr[4]{ 4 \Lambda \del[2]{\frac{K}{3},n,p} + \frac{tK}{3n} + \sqrt{\frac{2t}{n}} \sqrt{ 1+ 8 \Lambda \del[2]{\frac{K}{3},n,p}}}\label{vdGBbound}
\end{align}
with $\Lambda (\frac{K}{3},n,p) =  \sqrt{\frac{2\log 2p}{n}} + \frac{K\log 2p}{3n}$. Then, for all $t>0$ \cite{vdGB11} show (Theorem 14.5) that
\begin{align*}
P(Z_{M}\leq M\zeta)\geq 1-\exp(-t).
\end{align*}
Next note that there exist a constant $c>0$ (whose value may change throughout the display below) such that
\begin{align*}
\Lambda \del[2]{\frac{K}{3},n,p}
\leq
c\del[3]{\sqrt{\frac{\log(2)}{n}}+\sqrt{\frac{\log(p)}{n}}+\frac{\log(2)}{n}+\frac{\log(p)}{n}}
\leq
c\sqrt{\frac{\log(p)}{n}}
\end{align*}
Hence, choosing $t=\log(p)$ implies that there exists a constant $\tilde{c}>0$ (whose value may change throughout the display below) such that
\begin{align}
\zeta 
&\leq
\tilde{c}D\sbr[4]{\sqrt{\frac{\log(p)}{n}}+\frac{\log(p)}{n}+\sqrt{\frac{\log(p)}{n}}+\del[3]{\frac{\log(p)}{n}}^{3/4}}\\
&\leq
\tilde{c}D\sqrt{\frac{\log(p)}{n}}:=\lambda_0  
\end{align}
This implies
\begin{align*}
P(\tau)
=
P(Z_{M^*}\leq \lambda_0M^*)
\geq
P(Z_{M^*}\leq \zeta M^*)
\geq 
1-\exp\del[1]{-\log(p)}
=
1-\del[2]{\frac{1}{p}}
\end{align*}
\end{proof}

\begin{proof}[Proof of Theorem \ref{thm3}]
The bound in Theorem \ref{thm1} is valid on the set $\tau$. Theorem \ref{pbound} provides the stated lower bound on the probability of $\tau$. Combining these two results gives the theorem.

\end{proof}

\begin{lemma}\label{Hprop}
Let $G(u)$ be a strictly convex function on $[0,\infty)$, with $G(0)=0$. The convex conjugate $H(v)=\sup_{u\geq 0}[uv-G(u)],\ v\geq 0$ satisfies
\begin{enumerate}
\item $H$ is non-negative and non-decreasing.
\item $H$ is convex.
\item $H$ is right-continuous at $0$.
\end{enumerate} 
\end{lemma}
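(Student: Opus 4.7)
The plan is to verify each of the three claims almost directly from the definition $H(v)=\sup_{u\ge 0}\{uv-G(u)\}$, using only that $G(0)=0$, without invoking the full machinery of convex conjugates.

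For property 1, I would first choose $u=0$ in the supremum, which yields $uv-G(u)=0-G(0)=0$, so $H(v)\ge 0$ for every $v\ge 0$. For monotonicity, observe that if $0\le v_1\le v_2$ then for every fixed $u\ge 0$ we have $uv_1-G(u)\le uv_2-G(u)$, and taking the supremum over $u\ge 0$ on both sides preserves the inequality, giving $H(v_1)\le H(v_2)$.

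For property 2, the key observation is that for each fixed $u\ge 0$ the map $v\mapsto uv-G(u)$ is affine (indeed linear plus a constant) in $v$, so $H$ is a pointwise supremum of a family of affine functions and is therefore convex. More explicitly, for $v_1,v_2\ge 0$ and $t\in[0,1]$,
\[
H(tv_1+(1-t)v_2)=\sup_{u\ge 0}\bigl\{t(uv_1-G(u))+(1-t)(uv_2-G(u))\bigr\}\le tH(v_1)+(1-t)H(v_2),
\]
where the inequality uses that the supremum of a sum is at most the sum of the suprema.

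For property 3, I would combine properties 1 and 2. Because $H$ is non-decreasing, $\lim_{v\to 0^+}H(v)$ exists in $[H(0),\infty]$, so it suffices to show this limit is no larger than $H(0)$. Pick any $v_0>0$ for which $H(v_0)$ is finite (this is the standing setup in the paper, where typical choices like $G(u)=cu^2$ give $H(v_0)=v_0^2/(4c)<\infty$). For $0<v<v_0$ write $v=(1-v/v_0)\cdot 0+(v/v_0)\cdot v_0$ and apply convexity to obtain
\[
H(v)\le \Bigl(1-\tfrac{v}{v_0}\Bigr)H(0)+\tfrac{v}{v_0}H(v_0).
\]
Letting $v\to 0^+$ on the right gives $\limsup_{v\to 0^+}H(v)\le H(0)$, which combined with the non-decreasing property yields $\lim_{v\to 0^+}H(v)=H(0)$. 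The only mildly delicate point is the finiteness of $H$ at some positive $v_0$ needed for this interpolation bound; this is the one place where one implicitly uses that $G$ grows fast enough (an automatic consequence of strict convexity with $G(0)=0$ in all examples the paper considers, such as the quadratic margin).
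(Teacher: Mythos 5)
Your proof is correct and follows essentially the same route as the paper: parts 1 and 2 are verbatim the paper's argument, and for part 3 the paper performs the same convexity interpolation between $0$ and a point $v_0>0$, merely packaged as a proof by contradiction rather than your direct $\limsup$ bound. If anything your version is slightly more careful, since you explicitly flag the finiteness of $H$ at some positive point that both arguments need (the paper's passage to the limit $\lambda\uparrow 1$ silently assumes $H(x)<\infty$), a condition that indeed holds for the margin functions the paper actually uses, such as the quadratic one.
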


\begin{proof}
The non-negativity of $H$ follows from $H(v)\geq [0\cdot v-G(0)]=0$ for all $v\geq 0$. Let $0\leq v_1\leq v_2$. Then, since $[uv_1-G(u)]\leq [uv_2-G(u)]$ for all $u\geq 0$,  
\begin{align*}
H(v_1)=\sup_{u\geq 0}[uv_1-G(u)]\leq \sup_{u\geq 0}[uv_2-G(u)]=H(v_2)
\end{align*}
and so $H$ is non-deceasing.

The convexity of $H$ may be found in Theorem 12.2 of \cite{rockafellar97}. Here, for the sake of completeness, we give a more direct argument. For any $0<\lambda<1$  and $v_1,v_2\geq 0$
\begin{align*}
H(\lambda v_1+(1-\lambda)v_2)
&=
\sup_{u\geq 0}[u(\lambda v_1+(1-\lambda)v_2)-G(u)]
=
\sup_{u\geq 0}[\lambda(uv_1-G(u))+(1-\lambda)(uv_2-G(u))]\\
&\leq
\lambda\sup_{u\geq 0}(uv_1-G(u))+(1-\lambda)\sup_{u\geq 0}(uv_2-G(u))
=
\lambda H(v_1)+(1-\lambda)H(v_2)
\end{align*}
establishing the convexity of $H$.

To establish that $H$ is right continuous at 0 note first that $H(0)$ is a lower bound for  $\cbr[0]{H(x_n)}_{n=1}^\infty$ for any sequence $x_n\downarrow 0$ since $H$ is non-decreasing. Hence, $\cbr{H(x_n)}_{n=1}^\infty$ is a bounded non-increasing sequence and so it possesses at limit which furthermore satisfies $H(0)\leq \inf_{n}H(x_n)=\lim_n H(x_n)$. It suffices to show that $H(0)\geq \inf_{x>0}H(x)=\inf_nH(x_n)$ to conclude  $H(0)=\inf_{n}H(x_n)=\lim_n H(x_n)$. We assume the converse to reach a contradiction., i.e. assume that $H(0)< \inf_{x>0}H(x)$. In particular, $H(0)<\inf_{0<\lambda<1}H((1-\lambda)x)$ for all $x>0$ such that there exists an $\epsilon>0$ satisfying $\inf_{0<\lambda<1}H((1-\lambda)x)=H(0)+\epsilon$. But by the convexity of $H$ it holds for all $0<\lambda<1$ that
\begin{align*}
\lambda H(0)+(1-\lambda)H(x)
\geq 
H((1-\lambda)x)
\geq
H(0)+\epsilon
\end{align*}  
By continuity of the left hand side in $\lambda$ it follows that
\begin{align*}
H(0)
=
\lim_{\lambda \uparrow 1}\sbr[1]{\lambda H(0)+(1-\lambda)H(x)}
\geq 
H(0)+\epsilon
\end{align*}
which is a contradiction and so we can't have $H(0)<\inf_{x>0}H(x)$ and we conclude that $H$ is right-continuous at 0.
\end{proof}

\begin{proof}[{\bf Proof of Corollary \ref{corloss}}]
First, note that the probability with which the inequality in Theorem \ref{thm3} is valid tends to one. Also, this inequality implies
\begin{align*}
\Xi (\hat{f})
\leq
6 \Xi (f^*) + 4  H \left( \frac{4 \lambda_1 \sqrt{s^*} +4 \lambda_2 \|\beta^*\|_2}{\phi_*} \right)
\end{align*}
Hence, it suffices to show that $\limsup_{n\to \infty}H \left( \frac{4 \lambda_1 \sqrt{s^*} +4 \lambda_2 \|\beta^*\|_2}{\phi_*} \right)=\limsup_{n\to \infty}H \left( \frac{6\lambda_1 \sqrt{s^*}}{\phi_*} \right)=0$. To this end, observe that with $\lambda_1 \in O( \lambda_0)$
\begin{align*}
\frac{6\lambda_1 \sqrt{s^*}}{\phi_*} \in  O\del[2]{\sqrt{\frac{n^{a}}{n}n^b}}=O\del[1]{n^{a/2+b/2-1/2}}
\subseteq
o(1)
\end{align*}
which yields the desired result by the-right continuity of $H$ established in Lemma \ref{Hprop}. 
\end{proof}

\begin{proof}[{\bf Proof of Corollary \ref{corlin}}]
The first two inequalities, (\ref{loss1}) and (\ref{lin1}), follow from Theorem \ref{thm3} upon using the same reasoning as in remark 2 preceding Theorem \ref{thm1}.  In particular, choose $\Gamma=S^0$ in the definition of the oracle. This implies $\Xi(f^*)=\Xi(f^0)=0$ (as seen in remark 2). (\ref{loss2}) and (\ref{lin2}) follows from (\ref{loss1}) and (\ref{lin1}) under the given assumptions by simple insertion.
\end{proof}

\begin{proof}[{\bf Proof of Corollary \ref{corlinasym}}]
First note that the probability with which inequality (\ref{lin2}) is valid tends to one. It remains to be shown that the right hand side of (\ref{lin2}) tends to zero. But under the stated conditions the right hand side is of order
\begin{align*}
O\left( \sqrt{\frac{n^a}{n}} n^b \right)=O\del[2]{n^{a/2+b-1/2}}\subseteq o(1)
\end{align*}
where the last inclusion follows from the assumption $a+2b<1$.
\end{proof}

\begin{proof}[Proof of Lemma \ref{quad}]

a)
{\it The Analysis of Assumption 1}.
First, note that by the second derivative test $\rho(f(x),y)$ is a convex function in $f(x)$. Since, the second derivative is constant, and equal to 2 this also shows that the margin condition is satisfied with a quadratic margin and $2c=2$, i.e. $c=1$. This was of course already clear from (\ref{qm}) prior to Lemma \ref{quad}. The analysis of assumption 2 is slightly more involved:

{\it The Analysis of Assumption 2}.
We show that $f_\beta\in\mathbf{F}_{\text{local}}$ under the stated conditions. More precisely, we must show that $\enVert[0]{f_\beta-f^0}_\infty\leq \eta$. Since
\begin{align*}
\enVert[0]{f_\beta-f^0}_\infty
\leq
\enVert[0]{f_\beta-f^*}_\infty+\enVert[0]{f^*-f^0}_\infty 
\leq
\enVert[0]{f_\beta-f^*}_\infty+\eta/2
\end{align*}
it suffices to show that $\enVert[0]{f_\beta-f^*}_\infty\leq \eta/2$. To this end, note that
\begin{align*}
\envert[0]{f_\beta(x)-f^*(x)}
=
\envert[2]{\sum_{j=1}^p(\beta_j-\beta_j^*)\psi_j(x)}
\leq
\enVert{\beta-\beta^*}_{1}\max_{1\leq j\leq p}\envert[1]{\psi_j(x)}
\end{align*}
which implies $\enVert[0]{f_\beta-f^*}_\infty\leq M^*K$. Hence, it suffices to show that $M^*K\leq \eta/2$. To do so, recall that by using $H(v)=v^2/4c$ (with $c=1$), $\Xi(f^*)=E(f^*-f^0)^2$ and $\lambda_2 = \frac{\lambda_1 \sqrt{s^*}}{2 \| \beta^*\|_2}$ 
\begin{align*}
M^*
&=
\frac{\Delta^*}{\lambda_0}
= 
\frac{1}{\lambda_0}\del[4]{(3/2) E(f^*-f^0)^2+H \left(\frac{4 \lambda_1 \sqrt{s^*} + 4 \lambda_2 \|\beta^*\|_2}{\phi (S^*)}\right)}\\
&=
\frac{1}{\lambda_0}\del[4]{(3/2)E(f^*-f^0)^2+\left(\frac{4 \lambda_1 \sqrt{s^*} + 4 \lambda_2 \|\beta^*\|_2}{\phi (S^*)}\right)^2/4}\\
&=
\frac{1}{\lambda_0}\del[4]{(3/2) E(f^*-f^0)^2+\frac{9 \lambda_1^2 s^*}{\phi^2(S^*)}}\\
&\leq
\frac{1}{\lambda_0}\del[4]{\frac{18 \lambda_1^2 s^*}{\phi^2(S^*)}}\\
&\leq
\frac{18 L^2\lambda_0 s^*}{\phi^2(S^*)}
\end{align*}
such that $M^*K\leq\eta/2$ under the stated assumptions.  

{\it The Analysis of Assumption 3}. The validity of Assumption 3 follows from the fact that $\Sigma$ is assumed to have full rank. This is sufficient for Assumption 3 to be valid as argued just after (\ref{reseig}) in Section \ref{setup}.

Inequality (\ref{quada}) follows upon using $H(v)=v^2/4c$ with $c=1$ as well as $\Xi(f)=E(f-f^0)^2$ for all $f\in \mathbf{F}$ in Theorem \ref{thm1}.

b)
Next, we turn to part b) of the lemma. This result will be derived based on Theorem \ref{pbound}. Hence, we verify the assumptions of that theorem. The two boundedness conditions are valid by assumption. Next, we establish the local Lipschitz continuity. Note that on $\mathcal{A}=\cbr[0]{\max_{1\leq i\leq n}|X_i| \vee |\epsilon_i|\leq C_n}$
\begin{align*} 
\envert[3]{\frac{\partial \rho(f_\beta(X_i),Y_i)}{\partial f_\beta(X_i)}}
&=
2\envert[0]{Y_i-f_\beta(X_i)}
=
2(\envert[0]{\epsilon_i+f^0(X_i)-f_\beta(X_i)})
\leq 
2\del[3]{\envert[0]{\epsilon_i}+F_{C_n}+\envert[2]{\sum_{j=1}^p\beta_j\psi_j(x)}}\\
&\leq
2\del[1]{\envert[0]{\epsilon_i}+F_{C_n}+\enVert[0]{\beta}_1\max_{1\leq j\leq p}\enVert{\psi_j}_\infty}
\leq
2(C_n+F_{C_n}+GK)
\end{align*}
for all $i=1,...,n$. So, on the set $\mathcal{A}$, the first derivative of the loss function is bounded and hence the loss function is Lipschitz continuous on this set. This implies that
\begin{align*}
P(\tau^c)\leq P(\tau^c\cap \mathcal{A})+P(\mathcal{A}^c)
\end{align*}
By the above arguments $\rho(f(x),y)$ is Lipschitz continuous on $\mathcal{A}$ with Lipschitz constant $2(C_n+F_{C_n}+GK)$. Hence, by Theorem \ref{pbound} $P(\tau^c\cap \mathcal{A})\leq \del[2]{\frac{1}{p}}$ and the Lipschitz constant $D_n$ in the definition of $\lambda_0=dD_n\sqrt{\frac{\log(p)}{n}}$ in Theorem \ref{pbound} may be taken to be $2(C_n+F_{C_n}+GK)$. Next, through subgaussianity of $X_1, \epsilon_1$, by a union bound it follows that $P(\mathcal{A}^c)\leq 2\alpha n\exp(-\delta C_n^2)$ for positive constants $\alpha$ and $\delta$. This yields the stated lower bound on the probability of $\tau$ (on which inequality (\ref{quada}) is valid).
\end{proof}

\begin{proof}[Proof of Corollary \ref{nonpar}]
It follows directly from Lemma \ref{quad} that
\begin{align}
E(\hat{f}-f^0)^2
\leq
6E(f^*-f^0)^2 +  \frac{36 L^2\lambda_0^2 s^*}{\phi_*^2}\label{s1}.
\end{align}
with probability at least $1-\del[2]{\frac{1}{p}}-2\alpha n\exp(-\delta C_n^2)$.

The assumptions of part b) are sufficient for Lemma \ref{quad} to be valid as seen in the discussion succeeding Lemma \ref{quad}. (\ref{seriesest2}) follows from (\ref{seriesest}) using (\ref{scond}). The estimate (\ref{seriesest3}) follows by minimizing the order of the upper bound in (\ref{seriesest2}) with respect to $s^*$. The equality in (\ref{seriesest3}) uses that  $\lambda_0=dD_n\sqrt{\frac{\log(p)}{n}}$ with $D_n=2(C_n+F_{C_n}+GK)$ and $d>0$ and that $F_{C_n}$ is bounded since $f^0$ is continuous and the covariates have compact support. (\ref{seriesest4}) follows by insertion of $C_n$ into \ref{seriesest3}.
\end{proof}

\begin{proof}[Proof of Corollary \ref{quadcor}]
First note that the choice of $C_n$ and $p\to\infty$ ensure that the probability with which inequality (\ref{quada}) is valid tends to one. %To see that (\ref{quadinrec}) is satisfied asymptotically it suffices to establish $\lambda_0s^*\to 0$. To this end note that
%
%\begin{align*}
%(C_n+F_{C_n})\frac{\log(p)}{n}s^*
%=
%O\del[2]{(\sqrt{\log(n)}+n^{\tilde{d}})\sqrt{\frac{n^a}{n}}n^b}
%=
%o(1)
%\end{align*}
%since $a+2b+2\tilde{d}<1$. 
To proof part a) it suffices to show that $\lambda_0^2s^*\to 0$ (this follows from (\ref{quada})) which is in turns implied by
\begin{align*}
(C_n^2+F_{C_n}^2)\frac{\log(p)}{n}s^*
\in
O\del[2]{(\log(n)+n^{2\tilde{d}})\frac{n^a}{n}n^b}
\subseteq
o(1)
\end{align*}
where the first inclusion is by assumption and the second follows from $a+b+2\tilde{d}<1$.

Regarding part b), choosing $\Gamma=S^0$ in the definition of the oracle implies $\Xi(f^*)=\Xi(f^0)=0$ (as seen in remark 2 after Theorem \ref{thm1}) since the best linear predictor of a linear target is just the target itself (in this case we of course also have $s^*=s^0=1$ and $\beta^*=\beta^0$). Hence, we deduce from (\ref{quada}) that
\begin{align*}
|\hat{\beta}-\beta^0|
=
\| \hat{\beta} - \beta^0 \|_1 
\leq
\frac{9 L^2\lambda_0}{2\phi_*^2}.
\end{align*}

with probability tending to one for $\lambda_0=dD_n\sqrt{\frac{\log(p)}{n}}$ with $D_n=2(C_n+F_{C_n}+GK)$ and $d>0$. So, it suffices to see that $\lambda_0\to 0$ which is implied by
\begin{align*}
(C_n^2+F_{C_n}^2)\frac{\log(p)}{n}
\in
O\del[2]{(\log(n)+n^{2\tilde{d}})\frac{n^a}{n}}
\subseteq
o(1)
\end{align*}
if $a+2\tilde{d}<1$.
\end{proof}

\begin{proof}[Proof of Lemma \ref{logit}] 
The proof is similar to the one of Lemma 6.8 in \cite{vdGB11}. 
First, note that $\rho(f(x),y)$ is a convex function in $f(x)$ since it can be written as the sum of convex functions: the first right hand side term in (\ref{5.1}) of $\rho(f(x),y)$ is linear in $f(x)$, and the second term has positive second derivative. We start by showing that Assumptions 1-3 are satisfied.

{\bf Step 1}. {\it The Analysis of Assumption 1}. We shall show that one may choose $G(x)=cx^2$ for some positive constant $c$ to be defined precisely below. To do so we follow the general route laid out in the beginning of Section \ref{Examples}.
Define
\begin{align}
l(f(x),x) = E [ \rho(f(X),Y)|(X,f(X))=(x,f(x))] = - \pi(x)f(x) + \log (1 + \exp (f(x)))\label{log}
\end{align}

where $\pi(x)=E(Y|(X,f(X))=(x,f(x))$. (\ref{log}) is minimized with respect to $f\in\mathbf{F}$ at $f^0(x)=\log \left( \frac{\pi(x)}{1 - \pi(x) }\right)$. Hence, $f^0 (x) = \log \left( \frac{\pi(x)}{1 - \pi(x) }\right)$. Note that the second order partial derivative of $l(f(x),x)$ with respect to $f(x)$ is
\begin{equation}
\eval[2]{\frac{\partial^2 l (a,x)}{\partial a^2}}_{a=f(x)} 
= 
\frac{\exp(f(x))}{1+\exp(f(x))} \left( 1 - \frac{\exp(f(x))}{1+\exp(f(x))} \right)
=
\frac{\exp(f(x))}{(1+\exp(f(x)))^2}.\label{5.2} 
\end{equation}
So we must show that $\frac{\exp(f(x))}{(1+\exp(f(x)))}$ is bounded from below by a constant for $f\in \mathbf{F}_{\text{local}}$. To do so it suffices to bound $f(x)$ from above and below. To this end, note that for all $x\in\mathcal{X}$
\begin{equation*}
f^0(x)-\enVert[0]{f-f^0}_{\infty}\leq f(x)\leq f^0(x)+\enVert[0]{f-f^0}_{\infty}
\end{equation*}
which implies that for all $f\in\mathbf{F}_{\text{local}}$
\begin{align}
f^0(x)-\eta \leq f(x)\leq f^0(x)+\eta \label{pl2.2}
\end{align}
Furthermore, since $f^0(x) = \log \left( \frac{\pi(x)}{1 - \pi (x)} \right)$ and $\epsilon_0\leq \pi(x)\leq 1-\epsilon_0$, we get 
\begin{align*}
\log\del[2]{\frac{\epsilon_0}{1-\epsilon_0}}\leq f^0(x)\leq \log\del[2]{\frac{1-\epsilon_0}{\epsilon_0}}.
\end{align*} 
Together with (\ref{5.2}) and (\ref{pl2.2}) this implies that
\begin{align*}
\eval[2]{\frac{\partial^2 l (a,x)}{\partial a^2}}_{a=f(x)} 
\geq
\frac{\frac{\epsilon_0}{1-\epsilon_0}e^{-\eta}}{\del[1]{1+\frac{1-\epsilon_0}{\epsilon_0}e^\eta}^2}
\geq 
\frac{\epsilon_0e^{-\eta}}{\del[1]{1+e^\eta/\epsilon_0}^2}>0 
\end{align*}
for all $x\in\mathcal{X}$. Hence, one may use $2c=\frac{\epsilon_0e^{-\eta}}{\del[1]{1+e^\eta/\epsilon_0}^2}$. 

{\bf Step 2}.{\it The Analysis of Assumption 2}. We show that $f_\beta\in\mathbf{F}_{\text{local}}$ under the stated conditions. More precisely, we must show that $\enVert[0]{f_\beta-f^0}_\infty\leq \eta$. Since
\begin{align*}
\enVert[0]{f_\beta-f^0}_\infty
\leq
\enVert[0]{f_\beta-f^*}_\infty+\enVert[0]{f^*-f^0}_\infty 
\leq
\enVert[0]{f_\beta-f^*}_\infty+\eta/2
\end{align*}
it suffices to show that $\enVert[0]{f_\beta-f^*}_\infty\leq \eta/2$. To this end, note that
\begin{align*}
\envert[0]{f_\beta(x)-f^*(x)}
=
\envert[2]{\sum_{j=1}^p(\beta_j-\beta_j^*)\psi_j(x)}
\leq
\enVert{\beta-\beta^*}_{1}\max_{1\leq j\leq p}\envert[1]{\psi_j(x)}
\end{align*}
which implies $\enVert[0]{f_\beta-f^*}_\infty\leq M^*K$. Hence, it suffices to show that $M^*K\leq \eta/2$. To do so, recall that, by using $H(v)=v^2/4c$, and $\lambda_2 = \frac{\lambda_1 \sqrt{s^*}}{2 \| \beta^*\|_2}$ 
\begin{align*}
M^*
&=
\frac{\Delta^*}{\lambda_0}
= 
\frac{1}{\lambda_0}\del[4]{(3/2) \Xi (f^*)+H \left(\frac{4 \lambda_1 \sqrt{s^*} + 4 \lambda_2 \|\beta^*\|_2}{\phi (S^*)}\right)}\\
&=
\frac{1}{\lambda_0}\del[4]{(3/2) \Xi (f^*)+\left(\frac{4 \lambda_1 \sqrt{s^*} + 4 \lambda_2 \|\beta^*\|_2}{\phi (S^*)}\right)^2/4c}\\
&=
\frac{1}{\lambda_0}\del[4]{(3/2) \Xi (f^*)+\frac{9 \lambda_1^2 s^*}{\phi^2(S^*)c}}\\
&\leq
\frac{1}{\lambda_0}\del[4]{\frac{18 \lambda_1^2 s^*}{\phi^2(S^*)c}}\\
&\leq
\frac{18 L^2\lambda_0 s^*}{\phi^2(S^*)c}
\end{align*}
such that $M^*K\leq\eta/2$ under the stated assumptions. 

{\bf Step 3}.{\it The Analysis of Assumption 3}. The validity of Assumption 3 follows from the fact that $\Sigma$ is assumed to have full rank. This is sufficient for Assumption 3 to be valid as argued just after (\ref{reseig}) in Section \ref{setup}.

Inequality (\ref{logitbound1}) follows from Theorem 1 upon using $H(v)=v^2/4c$ with $c=\frac{\epsilon_0e^{-\eta}}{2\del[1]{1+e^\eta/\epsilon_0}^2}$. 
 
Next, we turn to part b) of Lemma \ref{logit}. It suffices to verify the assumptions of Theorem \ref{pbound} since (\ref{logitbound1}) is valid on the set $\tau$. First, note that $\rho(f(x),y)$ is Lipschitz continuous in $f(x)$ for all $y\in\mathcal{Y}$ since
\begin{align*}
\envert[3]{\eval[2]{\frac{\partial \rho(a,y)}{\partial a}}_{a=f(x)}}
=
\envert[1]{-y+\frac{e^{f(x)}}{1+f(x)}}
\leq
2
\end{align*}
and so $D$ in \ref{lipschitz} may be chosen to be 2. The two boundedness assumptions on the basis functions are valid by assumption. So, using $H(v)=v^2/4c$ in Theorem \ref{thm3} yields
\begin{align*}
\Xi (\hat{f}) + \lambda_1 \| \hat{\beta} - \beta^* \|_1 + \lambda_2 \|\hat{\beta}_{S^*} - \beta_{s^*}\|_2^2
&\leq 
6 \Xi (f^*) + 4  H \left( \frac{4 \lambda_1 \sqrt{s^*} +4 \lambda_2 \|\beta^*\|_2}{\phi_*} \right)
&\leq
6 \Xi (f^*) + 36 \frac{\lambda_1^2 s^*}{\phi_*^2c}
\end{align*}
\end{proof}

\bibliographystyle{chicagoa}	% (uses file "plain.bst")
\bibliography{references}		% expects file "myrefs.bib"
\end{document}